\newtheorem{lemma}[equation]{Lemma}
\newtheorem{prop}[equation]{Proposition}
\newtheorem{thm}[equation]{Theorem}
\newtheorem{cor}[equation]{Corollary}
\newtheorem{defn}[equation]{Definition}
\newtheorem{question}[equation]{Question}
\theoremstyle{definition}
\newtheorem{exmp}[equation]{Example}
\newtheorem{example}[equation]{Example}
\newtheorem{rmk}[equation]{Remark}
\numberwithin{equation}{section}
\newcommand{\N}{\mathbf{N}}
\newcommand{\R}{\mathbf{R}}
\newcommand{\chs}[2]{\operatorname{chr}^{#1}(#2)}
\newcommand{\chp}{chromatic polynomial}
\newcommand{\chrps}[3]{\chi^{#3}(#1,#2)}
\newcommand{\Mb}{M\"obius}
\newcommand{\euc}{Euler characteristic}
\newcommand{\m}{morphism}
\title{Chromatic polynomials of simplicial complexes}
\author{Jesper M.~M\o ller}
\address{Institut for Matematiske Fag\\
  Universitetsparken 5\\
  DK--2100 K\o benhavn}
\email{moller@math.ku.dk}
\urladdr{http://www.math.ku.dk/~moller}
\author{Gesche Nord}
\address{KdV Instituut voor wiskunde \\
Universeit van Amsterdam}
\email{Gesche.N@gmx.de}
\urladdr{}
\thanks{The first author was supported by the Danish National Research
  Foundation (DNRF) through the Centre for Symmetry and Deformation}
\thanks{We thank Eric Babson whose questions, in November $2011$ at
  the WATACBA workshop in Buenos Aires, led to this note.}
\subjclass[2010]{05C15,05C31} \keywords{Vertex coloring of simplicial
  complex, $s$-chromatic polynomial, $s$-chromatic lattices,
  $s$-Stirling number of second kind, \Mb\ function}
\begin{document}
\date{\today}
\maketitle

\begin{abstract}
  In this note we consider $s$-\chp s of finite simplicial complexes.
  The $s$-\chp s of simplicial complexes are higher dimensional
  analogues of chromatic polynomials for graphs.
\end{abstract}

\section{Introduction}
\label{sec:intro}

Let $K$ be a finite simplicial complex with vertex set $V(K) \neq
\emptyset$ and let $r \geq 1$ and $s \geq 1$ be two natural numbers. A
map $\mathrm{col} \colon V(K) \to \{1,2,\ldots,r\}$ is an {\em
  $(r,s)$-coloring\/} of $K$ if there are no monochrome $s$-simplices
in $K$ \cite{DMN2010}.  We write $\chrps Krs$ for the number of
$(r,s)$-colorings of $K$.

\begin{defn}\label{defn:schpK}
  The {\em $s$-chromatic polynomial\/} of $K$ is the function $\chrps
  Krs$ of $r$.  The {\em $s$-chromatic number\/} of $K$, $\chs sK$, is
  the minimal $r \geq 1$ with $\chrps Krs>0$.
\end{defn}

The theorem below shows that $\chrps Krs$ is indeed polynomial in $r$
for fixed $K$ and $s$.  (By notational convention, $[r]_i = r(r-1)
\cdots (r-i+1)$ is the $i$th falling factorial in $r$.)

\begin{thm}\label{lemma:StK}
  The $s$-\chp\ of $K$ is
  \begin{equation*}
    \chrps Krs = \sum_{i=\chs sK}^{|V(K)|} S(K,i,s)[r]_i
  \end{equation*}
  where $S(K,i,s)$ is the number of partitions of $V(K)$ into
  $i$ blocks containing no $s$-simplex of $K$.
\end{thm}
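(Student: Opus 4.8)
The plan is to prove the identity by a double-counting argument that reorganizes the set of all $(r,s)$-colorings according to the partition of $V(K)$ they induce. The central observation is that the prohibition of monochrome $s$-simplices is a condition imposed separately on each color class: a map $\mathrm{col}\colon V(K)\to\{1,\dots,r\}$ is an $(r,s)$-coloring precisely when, for every color $c$, the fiber $\mathrm{col}^{-1}(c)$ contains no $s$-simplex of $K$. I would first record this reformulation, noting that an $s$-simplex is monochrome exactly when all of its vertices lie in a single fiber.

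Next I would set up the correspondence. To an $(r,s)$-coloring $\mathrm{col}$ I associate the partition of $V(K)$ whose blocks are the nonempty fibers $\mathrm{col}^{-1}(c)$, together with the induced injection sending each block to the unique color its vertices carry. By the reformulation above, this is a partition of $V(K)$ into blocks containing no $s$-simplex of $K$. Conversely, a partition of $V(K)$ into $i$ such blocks together with an injective labelling of the blocks by colors from $\{1,\dots,r\}$ determines a unique $(r,s)$-coloring, and the two constructions are mutually inverse. Hence $(r,s)$-colorings using exactly $i$ colors correspond bijectively to pairs consisting of a valid $i$-block partition and an injection of its blocks into $\{1,\dots,r\}$.

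I would then count. For a fixed valid partition into $i$ blocks, the number of injections of the blocks into the $r$ colors is the falling factorial $[r]_i=r(r-1)\cdots(r-i+1)$, independently of the chosen partition. Summing over all valid partitions and grouping them by their number of blocks $i$, the total number of $(r,s)$-colorings becomes $\sum_i S(K,i,s)[r]_i$, where $S(K,i,s)$ counts the valid $i$-block partitions. Since a partition of a set of size $|V(K)|$ has between $1$ and $|V(K)|$ blocks, the index $i$ automatically ranges within these bounds.

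Finally I would pin down the lower summation limit. The least number of blocks occurring in a valid partition equals the least number of colors sufficing for an $(r,s)$-coloring: a valid $i$-block partition yields a coloring using $i$ colors, while every $(r,s)$-coloring induces a valid partition with at most $r$ blocks. Combined with the displayed formula, $\chrps Krs>0$ holds iff some $i\le r$ has $S(K,i,s)>0$, so the least such $r$ is the least $i$ with $S(K,i,s)>0$; by Definition~\ref{defn:schpK} this value is $\chs sK$. Consequently $S(K,i,s)=0$ for $i<\chs sK$, and the sum may be started at $i=\chs sK$. The only points requiring genuine care are this identification of the lower limit and the verification that the no-monochrome-$s$-simplex condition is truly a per-block condition; everything else is bookkeeping.
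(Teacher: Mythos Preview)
Your proof is correct and follows essentially the same double-counting argument as the paper: group $(r,s)$-colorings by the partition of $V(K)$ into nonempty color classes, observe that such a partition must have $s$-independent blocks, and count the $[r]_i$ injective labellings of an $i$-block partition by colors. You are in fact more careful than the paper, which simply writes the sum from $i=1$ without explicitly justifying that the lower limit may be taken to be $\chs sK$.
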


For $s=1$, an $(r,1)$-coloring of $K$ is a usual graph coloring,
$\chrps Kr1$ is the usual \chp , and $\chs 1K$ the usual chromatic
number of the $1$-skeleton of $K$.  In general, $\chrps Krs$ depends
only on the $s$-skeleton of $K$.  Although the higher $s$-\chp s for
simplicial complexes are analogues of $1$-\chp s for graphs we shall
shortly see that there are structural differences between the cases
$s=$ and $s>1$.

Figure~\ref{fig:MBcol} shows a triangulation $\mathrm{MB}$ of the \Mb\
band. To the left is a $(5,1)$- and to the right a $(2,2)$-coloring of
$\mathrm{MB}$. The \chp s and chromatic numbers \footnote{The
  computations behind the examples of this note were carried out in
  the computer algebra system Magma \cite{magma}.} of $\mathrm{MB}$
are
\begin{equation*}
  \chrps {\mathrm{MB}}rs =
  \begin{cases}
     r^5 - 10 r^4 + 35 r^3 - 50 r^2 + 24 r & s= 1\\
     r^5 - 5 r^3 + 5 r^2 - r & s= 2\\
     r^5 & s \geq 3
  \end{cases} \qquad
  \chs s{\mathrm{MB}} =
  \begin{cases}
    5 & s=1 \\ 2 & s=2 \\ 1 & s \geq 3
  \end{cases}
\end{equation*}


\begin{figure}[h]
  \centering
\begin{tikzpicture}
  [vertex/.style= {shape=circle,  fill=#1!100, minimum size =
  8pt, inner sep =0pt,draw}, vertex/.default=black, xscale=1.68, yscale=1.5]

\coordinate [vertex=red, label=above:$4$] (A1) at (-1,1);
\coordinate [vertex=red, label=above:$5$] (A2) at (0,1);
\coordinate [vertex=red, label=above:$1$] (A3) at (1,1);

\coordinate [vertex=red, label=below:$1$] (B1) at (-1.5,0);
\coordinate [vertex=blue, label=below:$2$] (B2) at (-.5,0);
\coordinate [vertex=blue, label=below:$3$] (B3) at (.5,0);
\coordinate [vertex=red, label=below:$4$] (B4) at (1.5,0);

\draw (B1) -- node[sloped] {$>$} (A1) -- (A2) -- (A3) -- node[sloped]
{$>$} (B4);
\draw (B1) -- (B2) -- (B3) -- (B4);
\draw (A1) -- (B2) -- (A2) -- (B3) -- (A3);

\begin{scope}[xshift=-4cm]
 
\coordinate [vertex=red, label=above:$4$] (A1) at (-1,1);
\coordinate [vertex=magenta, label=above:$5$] (A2) at (0,1);
\coordinate [vertex=blue, label=above:$1$] (A3) at (1,1);

\coordinate [vertex=blue, label=below:$1$] (B1) at (-1.5,0);
\coordinate [vertex=yellow, label=below:$2$] (B2) at (-.5,0);
\coordinate [vertex=green, label=below:$3$] (B3) at (.5,0);
\coordinate [vertex=red, label=below:$4$] (B4) at (1.5,0);

\draw (B1) -- node[sloped] {$>$} (A1) -- (A2) -- (A3) -- node[sloped]
{$>$} (B4);
\draw (B1) -- (B2) -- (B3) -- (B4);
\draw (A1) -- (B2) -- (A2) -- (B3) -- (A3);
\end{scope}

\end{tikzpicture}

\caption{A $(5,1)$-coloring and a $(2,2)$-coloring of a $5$-vertex
  triangulated M\"obius band $\mathrm{MB}$}
  \label{fig:MBcol}
\end{figure}

\subsection{Notation}
\label{sec:notation}
We shall use the following notation throughout the paper:
\begin{description}
\item[$K$] a finite simplicial complex
\item[$K^s$]  the $s$-skeleton of $K$
\item[$F^s(K)$]  the set of $s$-simplices $K$ 

\item[$\# V$ or $|V|$] the number of elements in the finite set $V$

\item[$V(K)$] the vertex set $\bigcup K$ of $K$ and $m(K)=|V(K)$ is
  the number of vertices in $K$

\item[{$D[V]$}] the complete simplicial complex of {\em all\/} subsets
  of the finite set $V$
\item[{$[m]$}] the finite set $\{1,\ldots,m\}$ of cardinality $m$
\item[{$[r]_i$}] the $i$the falling factorial polynomial $[r]_i = i!
  \binom ri$ in $r$
\item[$P(a,b)$] the open interval $(a,b)$ in the poset $P$
\end{description}



\section{Three ways to the $s$-\chp\ of a simplicial complex}
\label{sec:schp}

In this section we present three different to approaches to the
$s$-\chp\ $\chrps Krs$:
\begin{itemize}
\item Theorem~\ref{thm:connindep} via $1$-\chp s of graphs;
\item Theorem~\ref{thm:LsK}
via the \Mb\ function for the $s$-chromatic lattice;
\item Theorem~\ref{lemma:StK} via the simplicial $s$-Stirling numbers
  of the second kind.
\end{itemize}

\subsection{Block-connected $s$-independent vertex partitions}
\label{sec:conpartV}

Let $s \geq 1$ be a natural number.

\begin{defn}\label{defn:independent}
Let $B \subset V(K)$ be a set of vertices of $K$. Then
\begin{itemize}
\item $B$ is {\em $s$-independent\/} if $B$ contains no $s$-simplex of
  $K$;
\item $B$ is  {\em connected\/} if $K \cap D[B]$ is a
  connected simplicial complex;
\item the {\em connected components\/} of $B$ are the maximal
  connected subsets of $B$.
 \end{itemize}
\end{defn}

\begin{defn}\label{defn:cP}
  Let $P$ be a partition of $V(K)$.
  \begin{itemize}
  \item The {\em   graph\/} $G_0(P)$ of $P$ is the simple graph
    whose vertices are the blocks of $P$ and with two blocks connected
    by and edge if their union is connected;
  \item The {\em block-connected refinement\/} $P_0$ of $P$ is the
    refinement whose blocks are the connected components of the blocks
    of $P$;
  \item $P$ is {\em block-connected\/} if the blocks of $P$ are
    connected (ie if $P=P_0$).
  \end{itemize}
\end{defn}

\begin{lemma}\label{lemma:diffblocks}
  Let $P$ be a partition of $V(K)$. If two different blocks of the
  block-connected refinement $P_0$ are connected by an edge in the
    graph $G_0(P_0)$ of $P$ then they lie in different blocks of
  $P$.
\end{lemma}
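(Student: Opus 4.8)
The plan is to prove the contrapositive. Suppose $C_1$ and $C_2$ are two distinct blocks of the block-connected refinement $P_0$ that lie in the \emph{same} block $B$ of $P$; I will show that there is then no edge between $C_1$ and $C_2$ in $G_0(P_0)$, contrary to the hypothesis of the lemma. This is cleaner than a direct argument because the edge condition is a positive ``union is connected'' statement, which interacts well with the maximality built into the notion of connected component.

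First I would unwind the definitions to pin down which blocks of $P_0$ sit inside $B$. By Definition~\ref{defn:cP} the blocks of $P_0$ are the connected components of the blocks of $P$, and since $P$ is a partition each block of $P_0$ is a connected component of the \emph{unique} block of $P$ containing it. Hence $C_1$ and $C_2$, both contained in $B$, are precisely two distinct connected components of $B$, and by Definition~\ref{defn:independent} each is a \emph{maximal} connected subset of $B$.

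The heart of the argument is this maximality. By Definition~\ref{defn:cP} an edge joins $C_1$ and $C_2$ in $G_0(P_0)$ exactly when $C_1 \cup C_2$ is connected. But $C_1 \cup C_2 \subseteq B$, so were $C_1 \cup C_2$ connected it would be a connected subset of $B$ containing $C_1$; maximality of the connected component $C_1$ then forces $C_1 \cup C_2 = C_1$, i.e.\ $C_2 \subseteq C_1$, and symmetrically $C_1 \subseteq C_2$, whence $C_1 = C_2$. This contradicts $C_1 \neq C_2$. Therefore $C_1 \cup C_2$ is not connected, no edge joins $C_1$ and $C_2$ in $G_0(P_0)$, and the contrapositive is established.

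I do not anticipate a serious obstacle: the statement is essentially a repackaging of the defining property of connected components as maximal connected subsets. The only point demanding care is the bookkeeping in the middle step, namely verifying that two blocks of $P_0$ contained in a common block $B$ of $P$ are genuinely two \emph{distinct} connected components of $B$ (rather than, say, one being nested inside the other). This follows because $P_0$ is a partition of $V(K)$, so its blocks are pairwise disjoint, and the connected components of $B$ are exactly those blocks of $P_0$ lying in $B$.
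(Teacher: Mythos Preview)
Your proof is correct and follows essentially the same approach as the paper: the paper's one-line proof simply invokes that ``the connected components of the blocks of $P$ are maximal with respect to connectedness,'' and you have spelled out precisely how that maximality forces two distinct components of the same block $B$ to have disconnected union.
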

\begin{proof}
  The connected components of the blocks of $P$ are maximal with
  respect to connectedness.
\end{proof}

\begin{defn}
  $\mathrm{BCP}^s(K)$ is the set of all block-connected $s$-independent
  partitions of $V(K)$.
\end{defn}

Recall that $\chrps {G_0(P)}r1$ is the $1$-\chp\ of the simple graph
$G_0(P)$ of the partition $P$.

\begin{thm}\label{thm:connindep}
  The $s$-\chp\ for $K$ is the sum
  \begin{equation*}
    \chrps Krs = \sum_{P \in \mathrm{BCP}^s(K)} \chrps {G_0(P)}r1
  \end{equation*}
  of the $1$-chromatic polynomials and 
 the $s$-chromatic number of $K$ is the minimum  
\begin{equation*}
    \chs sK = \min_{P \in \mathrm{BCP}^s(K)} \chs 1{G_0(P)}
\end{equation*}
of the $1$-chromatic numbers for the   graphs of all the
block-connected $s$-independent partitions of $V(K)$.
\end{thm}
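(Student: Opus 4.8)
The plan is to sort the $(r,s)$-colorings of $K$ into fibers indexed by $\mathrm{BCP}^s(K)$ and to identify each fiber with the set of proper $r$-colorings of the associated graph. Given an $(r,s)$-coloring $c\colon V(K)\to[r]$, let $P_c$ be the partition of $V(K)$ into color classes. Since $c$ has no monochrome $s$-simplex, each color class contains no $s$-simplex, so $P_c$ is $s$-independent; its block-connected refinement $(P_c)_0$ is then block-connected and still $s$-independent, hence lies in $\mathrm{BCP}^s(K)$. This defines the sorting map $c\mapsto (P_c)_0$, and the first task is to show that its fiber over a fixed $P\in\mathrm{BCP}^s(K)$ has exactly $\chrps{G_0(P)}r1$ elements.

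For the fiber count I would exhibit a bijection with the proper $r$-colorings of $G_0(P)$. To a proper coloring $\gamma$ of the vertices (= blocks) of $G_0(P)$ assign the vertex coloring $c_\gamma$ that is constant on each block $B$ of $P$ with value $\gamma(B)$. The key point is that $c_\gamma$ really is an $(r,s)$-coloring: if some $s$-simplex $\sigma$ were monochrome, its vertices would meet blocks $B_1,\dots,B_t$ all carrying the same $\gamma$-value; any two vertices of $\sigma$ span an edge of $K$, so whenever $t\ge 2$ two of these blocks would have connected union and hence be adjacent in $G_0(P)$ with equal color, contradicting properness, while $t=1$ would force $\sigma\subseteq B_1$ against the $s$-independence of $B_1$. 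Here I use the standard fact that a full subcomplex is connected exactly when its $1$-skeleton is, so that two disjoint connected blocks have connected union precisely when some edge of $K$ joins them.

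It remains to check that $\gamma\mapsto c_\gamma$ lands in the fiber over $P$ and is a bijection. Because $\gamma$ is proper, no two blocks lying in a common color class of $c_\gamma$ are adjacent in $G_0(P)$, so that color class has no edge between distinct blocks and its connected components are exactly the blocks; thus $(P_{c_\gamma})_0=P$ and $c_\gamma$ is in the right fiber. Conversely, any $c$ with $(P_c)_0=P$ is constant on the blocks of $P$ (these being connected components of the color classes of $c$, hence subsets of color classes), so it equals $c_\gamma$ for the induced block-coloring $\gamma$, and Lemma~\ref{lemma:diffblocks} shows $\gamma$ is proper: two blocks of $P=(P_c)_0$ adjacent in $G_0(P)$ lie in different blocks of $P_c$, i.e.\ receive different colors. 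Summing the fiber counts over $P\in\mathrm{BCP}^s(K)$ yields the stated identity for $\chrps Krs$.

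Finally, the chromatic-number formula follows formally: each summand $\chrps{G_0(P)}r1$ is a nonnegative integer that is positive exactly when $r\ge\chs 1{G_0(P)}$, so the nonnegative sum $\chrps Krs$ is positive iff $r\ge\chs 1{G_0(P)}$ for some $P$, and the least such $r$ is $\min_{P}\chs 1{G_0(P)}$. I expect the main obstacle to be the bijection in the middle paragraphs — in particular pinning down the connectivity argument that translates ``monochrome $s$-simplex'' and ``equal color on adjacent blocks'' into statements about $G_0(P)$, for which the reduction of connectedness to the $1$-skeleton is essential.
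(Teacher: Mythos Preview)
Your proof is correct and follows essentially the same route as the paper: build the map $c\mapsto (P_c)_0\in\mathrm{BCP}^s(K)$, and set up the inverse construction $(\,P,\gamma\,)\mapsto c_\gamma$ by spreading a proper coloring of $G_0(P)$ over the blocks. The paper is terser---after describing the two constructions it simply declares ``these two constructions are inverses of each other''---whereas you actually verify this, checking $(P_{c_\gamma})_0=P$ via the observation that same-colored blocks cannot be adjacent in $G_0(P)$, and invoking Lemma~\ref{lemma:diffblocks} for the converse direction exactly as the paper does. Your explicit derivation of the chromatic-number identity from nonnegativity of the summands is also a detail the paper leaves to the reader.
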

\begin{proof}
  Let $\mathrm{col} \colon V(K) \to [r] $ be an $(r,s)$-coloring of
  $K$.  The monochrome partition $P(\mathrm{col})$ of $V(K)$ is the
  $s$-independent partition whose blocks are the nonempty monochrome
  sets of vertices $\{\mathrm{col}=i\}$ for $i \in [r]$.  The
  block-connected refinement $P(\mathrm{col})_0$ of the monochrome
  partition is a block-connected $s$-independent partition of $K$. The
  original coloring $\mathrm{col}$ of $K$ is also a coloring of the
  graph $G_0(P(\mathrm{col})_0)$ of $P(\mathrm{col})_0$ for, by
  Lemma~\ref{lemma:diffblocks}, distinct vertices of $1$-simplices of
  this graph have distinct colors. We have shown that any
  $(r,s)$-coloring $\mathrm{col}$ of $K$ induces an $(r,1)$-coloring
  $\mathrm{col}_0$ of the graph $G_0(P(\mathrm{col})_0)$ of the
  block-connected refinement of the monochrome partition.

  Let $P \in \mathrm{BCP}^s(K)$ be a block-connected $s$-independent
  partition of $V(K)$ and $\mathrm{col}_0 \colon P \to \{1,\ldots,r\}$
  an $(r,1)$-coloring of its graph $G_0(P)$.  Then $\mathrm{col}_0$
  determines a map $\mathrm{col} \colon V(K) \to [r]$ that is constant
  on the blocks of $P$.  An $s$-simplex of $K$ can not be monochrome
  under $\mathrm{col}$ as it intersects at least two different blocks
  of $P$ connected by an edge of $G_0(P)$. Thus $\mathrm{col}$ is an
  $(r,s)$-coloring of $K$.

  These two constructions are inverses of each other. 
\end{proof}



\begin{rmk}[The minimal block-connected $s$-independent partition]
 \label{rmk:C0}
Let $C_0=\{\{v\} \mid v \in V(K)\}$ be the block-connected
$s$-independent partition of $V(K)$ whose blocks are singletons. The
  graph $G_0(C_0) = K^1$ is the $1$-skeleton of $K$.  Thus
the $1$-\chp\ of the $1$-skeleton of $K$ is always one of the
polynomials in the sum of Theorem~\ref{thm:connindep}. If $K$ is
$1$-dimensional, $\mathrm{BCP}^1(K)$ consists only of the partition
$C_0$ and Theorem~\ref{thm:connindep} simply says that the $1$-\chp\
of a simplicial complex is the $1$-\chp\ of its $1$-skeleton.
\end{rmk}
  
\begin{exmp}[The block-connected $2$-independent partitions for {$D[3]$}]
  The $2$-simplex $D[3]$ has $4$ block-connected $2$-independent
  partitions $C_0$, $\{\{1\},\{2,3\}\}$, $\{\{2\},\{1,3\}\}$, and
  $\{\{3\},\{1,3\}\}$. The   graph of $C_0$ is the
  complete graph $K_3$, the $1$-skeleton of $D[3]$. The graphs of the
  other three partitions are all the complete graph $K_2$. Thus the
  $2$-\chp\ of $D[3]$ is $\chrps {D[3]}r2 = \chrps {K_3}r1 + 3\chrps
  {K_2}r1 = [r]_3 + 3[r]_2 = [r]_2(r+1) = r^3-r$ and the $2$-chromatic
  number is $\chs 2{D[3]} = 2$.
\end{exmp}

\begin{exmp}[A $(2,2)$-coloring and the graph of the block-connected
  refinement of its monochrome partition]\label{rmk:T2}
  The picture below illustrates a $(2,2)$-coloring of a $9$-vertex
  triangulation of the torus
  \begin{center}
 \begin{tikzpicture}
  [vertex/.style= {shape=circle,  fill=#1!100, minimum size =
  8pt, inner sep =0pt,draw}, vertex/.default=black]
  
\begin{scope}[scale=1.5]
  \draw [step = 1cm] (1,1) grid (4,4);
  \draw (1,1) -- (4,4);
  \draw (3,1) -- (4,2);
  \draw (2,1) -- (4,3);
  \draw (1,2) -- (3,4);
  \draw (1,3) -- (2,4);

  \node (a) at (1,4) [vertex=red,label=above left:$1$] {};
  \node (b) at (2,4) [vertex=blue,label=above:$2$] {};
  \node (c) at (3,4) [vertex=blue,label=above:$3$] {};
  \node (d) at (4,4) [vertex=red,label=above right:$1$] {};

  \node  at (1,3) [vertex=blue,label=left:$5$] {};
  \node  at (2,3) [vertex=red,label=below right:$8$] {};
  \node  at (3,3) [vertex=blue,label=below right:$9$] {};
  \node  at (4,3) [vertex=blue,label=right:$5$] {};

  \node  at (1,2) [vertex=blue,label=left:$4$] {};
  \node  at (2,2) [vertex=blue,label=below right:$6$] {};
  \node  at (3,2) [vertex=red,label=below right:$7$] {};
  \node  at (4,2) [vertex=blue,label=right:$4$] {};

  \node  at (1,1) [vertex=red,label=below left:$1$] {};
  \node  at (2,1) [vertex=blue,label=below:$2$] {};
  \node  at (3,1) [vertex=blue,label=below:$3$] {};
  \node  at (4,1) [vertex=red,label=below right:$1$] {};
\end{scope}
  \begin{scope}[xshift=7cm,yshift=1.25cm]
    \node (C1) at (3.5,2.5) 
    [rectangle,draw]  {\phantom{\rule[0pt]{160pt}{35pt}}}; 
     \node (C2) at (3.5,0.5) 
    [circle,draw]  {\phantom{\rule[0pt]{15pt}{15pt}}};
    \node (C3) at (1,4.5) 
    [circle,draw]  {\phantom{\rule[0pt]{15pt}{15pt}}};
     \node (C4) at (6,4.5) 
    [circle,draw]  {\phantom{\rule[0pt]{15pt}{15pt}}};

    \node at (1,2.4) [vertex=blue, label= above:$2$] {};
    \node at (2,2.4) [vertex=blue, label= above:$3$] {};
    \node at (3,2.4) [vertex=blue, label= above:$4$] {};
    \node at (4,2.4) [vertex=blue, label= above:$5$] {};
    \node at (5,2.4) [vertex=blue, label= above:$6$] {};
    \node at (6,2.4) [vertex=blue, label= above:$9$] {};

    \node at (3.5,0.4) [vertex=red, label= above:$1$] {};
    \node at (1,4.4) [vertex=red, label= above:$7$] {};
    \node at (6,4.4) [vertex=red, label= above:$8$] {};

    \draw (C1) -- (C2);
    \draw (C1) -- (C3);
    \draw (C1) -- (C4);
  \end{scope}
\end{tikzpicture}
\end{center}
and its corresponding graph. There are $6937$ block-connected
partitions of the vertex set, and $3$ of them has the graph shown
above. The $2$-chromatic polynomial is $21[r]_2+ 742[r]_3 + 3747[r]_4
+ 4908[r]_5 + 2295[r]_6 + 444[r]_7 + 36[r]_8 + [r]_9 = [r]_2(r^7 + r^6
- 17r^5 + 10r^4 + 82r^3 - 116r^2 - 23r + 67)$ and the $2$-chromatic
number is $2$.
\end{exmp}

\begin{exmp}[The $(r,2)$-colorings of a simplicial complex $K$]\label{exmp:K}
  Let $K$ be the pure $2$-dimensional complex with facets $F^2(K) =
  \{\{1,2,3\},\{2,3,4\},\{4,5,6\}\}$.
  \begin{center}
    \begin{tikzpicture}
[vertex/.style= {shape=circle,  fill=#1!100, minimum size =
  8pt, inner sep =0pt,draw}, vertex/.default=black,scale=0.75,xscale=1.5]

  \path [fill = green!20] (0,0) -- (2,2) -- (2,-2) -- (0,0); 
  \path [fill = green!20] (4,0) -- (2,2) -- (2,-2) -- (4,0); 
  \path [fill = green!20] (4,0) -- (6,2) -- (6,-2) -- (4,0);

  \draw (2,2) -- (2,-2);
  \draw (0,0) -- (2,2) -- (4,0) -- (6,2) -- (6,-2) -- (4,0) -- (2,-2)
  -- (0,0) -- cycle;

  \draw (0,0) node[vertex=red, label=left:$1$] {};
  \draw (2,2) node[vertex=red, label=above:$2$] {};
  \draw (2,-2) node[vertex=blue, label=below:$3$] {};
  \draw (4,0) node[vertex=blue, label=below:$4$] {};
  \draw (6,-2) node[vertex=red, label=below:$5$] {};
  \draw (6,2) node[vertex=red, label=above:$6$] {};
\end{tikzpicture} \hspace{1cm}
 \begin{tikzpicture}
[vertex/.style= {shape=circle,  fill=#1!100, minimum size =
  8pt, inner sep =0pt,draw},
vertex/.default=black,scale=0.75]

 \draw (0,0) node[vertex=red] {};
  \draw (3,2) node[vertex=red] {};
  \draw (3,-2) node[vertex=blue] {};
  \draw (6,0) node[vertex=blue] {};
  \draw (9,-2) node[vertex=red] {};
  \draw (9,2) node[vertex=red] {};

  \path (1.5,1) node(a) [ellipse, rotate=-57, draw] 
 {\phantom{\rule[0pt]{20pt}{60pt}}};

 \path (4.5,-1) node(b) [ellipse, rotate=-57, draw] 
 {\phantom{\rule[0pt]{20pt}{60pt}}};

 \path (9,0) node(c) [ellipse, rotate=0, draw] 
 {\phantom{\rule[0pt]{20pt}{70pt}}};

 \draw (a) -- (b) -- (c);

\end{tikzpicture} \hspace{2cm}
\end{center}
The picture shows a $(2,2)$-coloring of $K$ and the corresponding
$(2,1)$-coloring of the associated graph, $G_0(P_0)$, the block
connected refinement of the monochrome partition
$P=\{\{1,2,5,6\},\{3,4\}\}$.  Table~\ref{tab:G0PforK} shows the graphs
$G_0(P)$ for all block connected partitions $P \in \mathrm{BCP}^2(K)$.
For each graph, the table records its $1$-chromatic polynomial and its
$1$-chromatic number. The $2$-chromatic polynomial of $K$ is $\chrps
K22 = 15[r]_2 + 73[r]_3 + 62[r]_4 + 15[r]_5 + [r]_6 =
[r]_2(r-1)(r+1)(r^2+r-1)$ and the $2$-chromatic number is $\chs 2K =
2$.

\begin{table}[t]
  \centering
\begin{tabular}[t]{c|>{\centering\arraybackslash}m{2.2cm}|c|c}
    $\#$  in $\mathrm{BCP}^2(K)$ & $G_0(P)$ & $\chrps {G_0(P)}r1$ 
    & $\chs 1{G_0(P)}$ \\ \hline
    $1$ & \vspace{2mm}
    \begin{tikzpicture}
      [vertex/.style= {shape=circle,  fill=#1!100, minimum size =
  3pt, inner sep =0pt,draw}, vertex/.default=black,scale=.25,xscale=1.3]
    \draw (0,0) node[vertex] {} -- 
    (2,2) node[vertex] {} -- 
   (4,0) node[vertex] {} -- 
   (6,2) node[vertex] {} -- 
   (6,-2) node[vertex] {} -- 
   (4,0) node[vertex] {} --
    (2,-2) node[vertex] {} -- 
    (0,0) -- cycle;
    \draw (2,2) -- (2,-2);
    \end{tikzpicture} &
    $r(r-1)^2(r-2)^3$ &  $3$  \\   
    $1$ &
     \begin{tikzpicture}
      [vertex/.style= {shape=circle,  fill=#1!100, minimum size =
  3pt, inner sep =0pt,draw}, vertex/.default=black,scale=.25,xscale=1.3]
    \draw (0,2) node[vertex] {} -- 
    (0,-2) node[vertex] {} -- 
   (2,0) node[vertex] {} -- 
   (4,0) node[vertex] {} -- 
   (6,0) node[vertex] {};
    \draw (0,2) -- (2,0);
    \end{tikzpicture} &
    $r(r-1)^3(r-2)$ & $3$ \\        
    $3$ &
      \begin{tikzpicture}
      [vertex/.style= {shape=circle,  fill=#1!100, minimum size =
  3pt, inner sep =0pt,draw}, vertex/.default=black,scale=.25,xscale=1.3]
    \draw (0,0) node[vertex] {} -- 
    (2,-2) node[vertex] {} -- 
   (4,0) node[vertex] {} -- 
   (6,0) node[vertex] {};
   \draw (0,0) --  
   (2,2) node[vertex] {} -- (4,0); 
   \draw (2,2) -- (2,-2);
    \end{tikzpicture} &
    $r(r-1)^2(r-2)^2$ & $3$ \\  
    $4$ &
      \begin{tikzpicture}
      [vertex/.style= {shape=circle,  fill=#1!100, minimum size =
  3pt, inner sep =0pt,draw}, vertex/.default=black,scale=.25,xscale=1.3]
    \draw (0,2) node[vertex] {} -- 
    (0,-2) node[vertex] {} -- 
   (2,0) node[vertex] {} -- 
   (4,2) node[vertex] {} -- 
   (4,-2) node[vertex] {} -- (2,0);
   \draw (0,2) -- (2,0);
    \end{tikzpicture} &
    $r(r-1)^2(r-2)^2$ & $3$ \\    
    $16$ &
     \begin{tikzpicture}
      [vertex/.style= {shape=circle,  fill=#1!100, minimum size =
  3pt, inner sep =0pt,draw}, vertex/.default=black,scale=.25,xscale=1.3]
    \draw (0,2) node[vertex] {} -- 
    (0,-2) node[vertex] {} -- 
   (2,0) node[vertex] {} -- 
   (4,0) node[vertex] {}; 
   \draw (0,2) -- (2,0);
    \end{tikzpicture} &
    $r(r-1)^2(r-2)$  & $3$ \\  
    $3$ &
      \begin{tikzpicture}
      [vertex/.style= {shape=circle,  fill=#1!100, minimum size =
  3pt, inner sep =0pt,draw}, vertex/.default=black,scale=.25,xscale=1.3]
    \draw (0,0) node[vertex] {} -- 
    (2,0) node[vertex] {} -- 
   (4,0) node[vertex] {} -- 
   (6,0) node[vertex] {}; 
    \end{tikzpicture} &
   $r(r-1)^3$ &  $2$ \\        
     $12$ &
      \begin{tikzpicture}
      [vertex/.style= {shape=circle,  fill=#1!100, minimum size =
  3pt, inner sep =0pt,draw}, vertex/.default=black,scale=.25,xscale=1.3]
    \draw (0,0) node[vertex] {} -- 
    (2,0) node[vertex] {} -- 
   (4,0) node[vertex] {}; 
    \end{tikzpicture} &
    $r(r-1)^2$ & $2$ 
  \end{tabular}
  \caption{The graphs for the block-connected partitions in 
           $\mathrm{BCP}^2(K)$}
  \label{tab:G0PforK}
\end{table}
\end{exmp}  

  \begin{exmp}[The $(r,2)$-colorings of the \Mb\ band]
    The set $\mathrm{BCP}^2(\mathrm{MB})$ of block-connected
    $2$-independent partitions of the triangulated \Mb\ band
    $\mathrm{MB}$ (Figure~\ref{fig:MBcol}) has $36$ elements. There
    are $5,5,15,10,1$ partitions in $\mathrm{BCP}^2(\mathrm{MB})$
    realizing the partitions $[3,2]$, $[3,1,1]$, $[2,2,1]$,
    $[2,1,1,1]$, $[1,1,1,1,1]$ of the integer $|V(\mathrm{MB})|=5$.
    All associated graphs are complete graphs.
    This yields the $2$-\chp\ $\chrps {\mathrm{MB}}r2 = 5[r]_2 +
    20[r]_3 + 10[r]_4 + [r]_5 = [r]_2(r^3 + r^2 - 4r + 1) = r^5 - 5r^3
    + 5r^2 - r$ and the $2$-chromatic number is $\chs 2{\mathrm{MB}} =
    2$.
\end{exmp}

\begin{rmk}[The $\mathcal{S}$-chromatic polynomial of
  $K$]\label{rmk:Schrpol} 
  Let $\mathcal{S}$ be a set of connected subcomplexes of $K$. A set
  $B \subset V(K)$ of vertices is $\mathcal{S}$-independent if $B$ is
  not a superset of any member of $\mathcal{S}$. Let
  $\mathrm{BCP}^\mathcal{S}(K)$ be the set of
  $\mathcal{S}$-independent partitions of $V(K)$. An
  $(r,\mathcal{S})$-coloring is a map $V(K) \to \{1,\ldots,r\}$ such
  that $\#\mathrm{col}(S)>1$ for all $S \in \mathcal{S}$. The number
  of $(r,\mathcal{S})$-colorings of $K$ is
 \begin{equation*}
    \chrps Kr{\mathcal{S}} = 
    \sum_{P \in \mathrm{BCP}^\mathcal{S}(K)} \chrps {G_0(P)}r1
  \end{equation*}
  as one sees by an obvious generalization of
  Theorem~\ref{thm:connindep}. An $(r,s)$-coloring of $K$ is an
  $(r,\mathcal{S})$-coloring of $K$ where $\mathcal{S} = F^s(K)$ is
  the set of $s$-simplices.
\end{rmk}

\subsection{The $s$-chromatic linear program}
\label{sec:linprg}
Read \cite[\S10]{read68} explains how to construct a linear program
with minimal value equal to the $s$-chromatic number $\chs sK$ of $K$.


\marginpar{\textcolor{red}{maximal?}}

\begin{defn}
  $M^s(K)$ is the set of all maximal $s$-independent subsets of
  $V(K)$.
\end{defn}

Let $A$ be the $(m(K) \times |M^s(K)|)$-matrix
\begin{equation*}
  A(v,M) =
  \begin{cases}
    1 & v \in M \\
    0 & v \not\in M
  \end{cases}
\end{equation*}
recording which vertices $v \in V(K)$ belong to which maximal $s$-independent
sets $M \in M^s(K)$. Now the $s$-chromatic number
\begin{equation*}
  \chs sK = \min \{ \sum_{M \in M^s(K)} x(M) \mid
  x \colon M^s(K) \to \{0,1\}, \forall v \in V(K) \colon \sum_{M \in M^s(K)}
  A(v,M)x(M) \geq 1 \}
\end{equation*}
is the minimal value of the objective function $\sum_{M \in M^s(K)} x(M)$
in  $|M^s(K)|$
variables $x \colon M^s(K) \to \{0,1\}$, taking values $0$ or $1$, and
$m(K)$ constraints $\sum_{M \in M^s(K)} A(v,M)x(M) \geq 1$, $v \in V(K)$.

\subsection{The $s$-chromatic lattice}
\label{sec:chrlattice}

Our approach here simply follows Rota's classical method for computing
\chp s from \Mb\ functions of lattices \cite[\S9]{rota64}.  We need
some terminology in order to characterize the monochrome loci for
colorings of $K$. Recall that $F^s(K)$ is the set of $s$-simplices of
$K$.

\begin{defn}
  Let $S \subset F^s(K)$ be a set of $s$-simplices of $K$.
  \begin{itemize}
  \item The equivalence relation $\sim$  is the smallest equivalence
    relation in $S$ 
     such that $s_1 \cap
    s_2 \neq \emptyset \Longrightarrow s_1 \sim s_2$ for all $s_1, s_2
    \in S$;
  \item the connected components of $S$ are the equivalence classes
    under $\sim$;
  \item $\pi_0(S)$ is the set of connected components of $S$;
  \item $S$ is {\em connected\/} if it has at most one component;
  \item $V(S) = \bigcup S$ is the {\em vertex set\/} of $S$
  \item $\pi(S)$ is the partition of $V(K)$ whose blocks are the
    vertex sets of the connected components of $S$ together with the
    singleton blocks $\{v\}$, $v \in V(K)-V(S)$, of vertices not in
    any simplex in $S$;
  \item $S$ is {\em closed\/} if $S$ contains any $s$-simplex in $K$
    contained in the vertex set of $S$, ie if
\begin{equation*}
  \{ \sigma \in F^s(K) \mid \sigma \subset V(S) \} = S
\end{equation*}
\item the {\em closure\/} of $S$ is the smallest closed set of $s$-simplices
containing $S$.
  \end{itemize}
\end{defn}

For instance, the empty set $S=\emptyset$ of $0$ $s$-simplices is
connected with $0$ connected components. If $K=D[4]$, the set
$\{\{1,2\},\{2,4\}\}$ of $1$-simplices is connected while
$\{\{1,2\},\{3,4\}\}$ has the two components $\{\{1,2\}\}$ and
$\{\{3,4\}\}$.

A set of $s$-simplices is closed if and only if it equals its closure.
For instance in $F^2(D[5])$, the set $\{\{1,2,3\},\{3,4,5\}\}$ is not
closed because its closure is the set of all $2$-simplices in $D[5]$.
The empty set of $s$-simplices, any set of just one $s$-simplex, and
any set of disjoint $s$-simplices are closed.

In this picture the green set of $2$-simplices is
\begin{center}
 \begin{tikzpicture}
  [vertex/.style= {shape=circle,  fill=#1!100, minimum size =
  4pt, inner sep =0pt,draw}, vertex/.default=black]

\begin{scope}[yscale=1]
 \path [fill=green!20] (0,0) -- (0,1) -- (1.12,0) --(0,0); 
 \path [fill=green!20] (0,1) -- (0,2) -- (1.12,2) --(0,1);
 \path [fill=green!20] (1.12,2) -- (2.24,2) -- (2.24,1) --(1.12,2);
 \path [fill=green!20] (2.24,1) -- (2.24,0) -- (1.12,0) --(2.24,1);

  \node (a) at (0,0) [vertex] {};
  \node (b) at (0,1) [vertex] {};
  \node (c) at (0,2) [vertex] {};
  \node (d) at (1.12,2) [vertex] {};
  \node (e) at (2.24,2) [vertex] {};
  \node (f) at (2.24,1) [vertex] {};
  \node (g) at (2.24,0) [vertex] {};
  \node (h) at (1.12,0) [vertex] {};

  \draw (a) -- (c) -- (e) -- (g) -- (a) -- cycle;
  \draw (b) -- (d) -- (f);
  \draw (b) -- (h) -- (f);
  \draw (d) -- (h);
\end{scope}

\begin{scope}[yscale=1,xshift=4cm]
 
 \path [fill=green!20] (0,1) -- (0,2) -- (1.12,2) --(0,1);
 
 \path [fill=green!20] (2.24,1) -- (2.24,0) -- (1.12,0) --(2.24,1);

  \node (a) at (0,0) [vertex] {};
  \node (b) at (0,1) [vertex] {};
  \node (c) at (0,2) [vertex] {};
  \node (d) at (1.12,2) [vertex] {};
  \node (e) at (2.24,2) [vertex] {};
  \node (f) at (2.24,1) [vertex] {};
  \node (g) at (2.24,0) [vertex] {};
  \node (h) at (1.12,0) [vertex] {};

  \draw (a) -- (c) -- (e) -- (g) -- (a) -- cycle;
  \draw (b) -- (d) -- (f);
  \draw (b) -- (h) -- (f);
  \draw (d) -- (h);
\end{scope}

\begin{scope}[yscale=1,xshift=8cm]

 \path [fill=green!20] (0,1) -- (1.12,2) -- (1.12,0) --(0,1);
 \path [fill=green!20] (1.12,2) -- (1.12,0) -- (2.24,1) --(1.12,2);

  \node (a) at (0,0) [vertex] {};
  \node (b) at (0,1) [vertex] {};
  \node (c) at (0,2) [vertex] {};
  \node (d) at (1.12,2) [vertex] {};
  \node (e) at (2.24,2) [vertex] {};
  \node (f) at (2.24,1) [vertex] {};
  \node (g) at (2.24,0) [vertex] {};
  \node (h) at (1.12,0) [vertex] {};

  \draw (a) -- (c) -- (e) -- (g) -- (a) -- cycle;
  \draw (b) -- (d) -- (f);
  \draw (b) -- (h) -- (f);
  \draw (d) -- (h);
\end{scope}
\end{tikzpicture}
\end{center}
connected and not closed, closed and not connected, closed and
connected, respectively.

The partition $\pi(S)$ has $|\pi(S)| =
|\pi_0(S)|+m(K)-|V(S)|$ blocks.

\begin{lemma}\label{lemma:closedcomp}
  Let $S$ be a set of $s$-simplices in $K$ and $S_0$ a connected
  component of $S$. Then $S_0$ is closed if and only if
  \begin{equation*}
    \{ \sigma \in F^s(K) \mid \sigma \subset V(S_0) \} \subset S
  \end{equation*}
\end{lemma}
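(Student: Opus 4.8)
My plan is to unwind the definition of ``closed'' for the component $S_0$ and reduce the claimed equivalence to a short argument with the relation $\sim$. I would introduce the abbreviation $T = \{\sigma \in F^s(K) \mid \sigma \subset V(S_0)\}$ for the set on the right-hand side, so that by definition $S_0$ is closed exactly when $T = S_0$. The first observation is that the inclusion $S_0 \subset T$ always holds, since every $s$-simplex in $S_0$ has all of its vertices in $V(S_0) = \bigcup S_0$. Hence ``$S_0$ closed'' is equivalent to the reverse inclusion $T \subset S_0$, and the lemma reduces to showing that $T \subset S_0$ holds if and only if $T \subset S$. One implication is then immediate: if $S_0$ is closed, then $T = S_0 \subset S$ because a connected component is in particular a subset of $S$.

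The main work is the converse. Assuming $T \subset S$, I would take an arbitrary $\sigma \in T$ and show $\sigma \in S_0$. By hypothesis $\sigma \in S$, so $\sigma$ lies in some connected component of $S$, and it remains only to identify that component as $S_0$. Choosing a vertex $v \in \sigma$ and using $\sigma \subset V(S_0) = \bigcup S_0$, there is some $\tau \in S_0$ with $v \in \tau$, whence $\sigma \cap \tau \neq \emptyset$ and therefore $\sigma \sim \tau$. Since $\tau \in S_0$ and $S_0$ is an equivalence class of $\sim$, this forces $\sigma \in S_0$, giving $T \subset S_0$ as required.

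The only delicate point to watch is that $\sim$, and hence the notion of connected component, is defined only on the set $S$ of $s$-simplices under consideration. This is precisely why the hypothesis $T \subset S$ is indispensable: it is what places $\sigma$ inside $S$, so that the step ``shared vertex $\Rightarrow \sigma \sim \tau$'' applies and lands $\sigma$ in the same class as $\tau \in S_0$. Beyond this bookkeeping I expect no genuine obstacle.
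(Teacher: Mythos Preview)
Your proposal is correct and follows essentially the same route as the paper: both treat the necessary direction as immediate and, for sufficiency, take an $s$-simplex $\sigma$ with $\sigma \subset V(S_0)$, use the hypothesis to place $\sigma$ in $S$, and then observe that a shared vertex with some $\tau \in S_0$ forces $\sigma \sim \tau$ and hence $\sigma \in S_0$. Your version is simply more explicit about selecting the vertex $v$ and the simplex $\tau$, whereas the paper compresses this into the phrase ``$\sigma$ is equivalent to all elements of the equivalence class $S_0$.''
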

\begin{proof}
  Since the condition is certainly necessary we only need to see that
  it is sufficient. Let $\sigma$ be an $s$-simplex in $K$ with all its
  vertices in $V(S_0)$. Then $\sigma$ lies in $S$ by assumption. But
  $\sigma$ is equivalent to all elements of the equivalence class
  $S_0$. Thus $\sigma \in S_0$.
\end{proof}

\begin{lemma}\label{lemma:meet}
  Let $S$ and $T$ be sets of $s$-simplices in $K$.
  \begin{enumerate}
  \item \label{lemma:meet1} If $S$ and $T$ are closed, so is $S \cap T$.
  \item \label{lemma:meet2} If $S$ and $T$ have closed connected
    components, so does $S \cap T$
  \end{enumerate}
\end{lemma}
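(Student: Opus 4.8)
The plan is to treat the two parts separately, since part~(1) is essentially immediate from the vertex-set formulation of closedness, while part~(2) requires first locating the components of $S \cap T$ inside the components of $S$ and of $T$.

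For part~(1), I would argue directly from the definition, which refers only to vertex sets. The key observation is that $V(S \cap T) = \bigcup (S \cap T) \subseteq V(S) \cap V(T)$, since $S \cap T$ is a subset of both $S$ and $T$. Now take any $\sigma \in F^s(K)$ with $\sigma \subset V(S \cap T)$. Then $\sigma \subset V(S)$, so closedness of $S$ forces $\sigma \in S$, and likewise $\sigma \in T$; hence $\sigma \in S \cap T$. The reverse inclusion $S \cap T \subseteq \{\sigma \in F^s(K) \mid \sigma \subset V(S \cap T)\}$ is automatic (each $\sigma \in S\cap T$ is an $s$-simplex with $\sigma \subset V(S\cap T)$), so $S \cap T$ is closed.

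For part~(2), write $U = S \cap T$ and let $U_0$ be any connected component of $U$; I want to show $U_0$ is closed, for which Lemma~\ref{lemma:closedcomp} reduces the task to verifying $\{\sigma \in F^s(K) \mid \sigma \subset V(U_0)\} \subset U$. The crucial structural step is that a connected subset of $S$ lies inside a single connected component of $S$: since any two $s$-simplices of $U_0$ are joined by a chain of pairwise-intersecting simplices within $U_0 \subseteq S$, they are $\sim$-equivalent in $S$, so $U_0$ is contained in a unique component $S_0$ of $S$, and symmetrically in a unique component $T_0$ of $T$. Consequently $V(U_0) \subseteq V(S_0)$ and $V(U_0) \subseteq V(T_0)$. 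Now if $\sigma \in F^s(K)$ satisfies $\sigma \subset V(U_0)$, then $\sigma \subset V(S_0)$, and since $S_0$ is a closed component of $S$ we get $\sigma \in S_0 \subseteq S$; similarly $\sigma \in T$. Thus $\sigma \in S \cap T = U$, which is exactly the condition in Lemma~\ref{lemma:closedcomp}, so $U_0$ is closed.

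I expect the only genuinely delicate point to be the claim that connectedness is inherited upward along the inclusion $U_0 \subseteq S$, i.e.\ that a component of the smaller set sits inside a single component of the larger set. This is where one must unwind the definition of $\sim$ as the transitive closure of the intersection relation and note that a connecting chain inside $U_0$ is in particular a connecting chain inside $S$; everything else is formal manipulation of vertex sets.
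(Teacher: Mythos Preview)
Your proof is correct and follows essentially the same route as the paper: both parts argue via vertex sets, and for part~(2) both locate the component $U_0$ inside single components $S_0$, $T_0$ of $S$ and $T$, then invoke Lemma~\ref{lemma:closedcomp}. The only cosmetic difference is that the paper packages the step ``$\sigma \subset V(S_0)$ and $\sigma \subset V(T_0)$ imply $\sigma \in S_0 \cap T_0$'' as an application of part~(1) to the closed sets $S_0$ and $T_0$, whereas you apply closedness of $S_0$ and $T_0$ separately; your explicit justification of why $U_0$ sits in a single component of $S$ is a point the paper leaves implicit.
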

\begin{proof}

  \noindent \eqref{lemma:meet1}
  Let $\sigma$ be an $s$-simplex of $K$ and suppose that $\sigma
  \subset V(S \cap T)$. Then $\sigma \subset V(S)$ an $\sigma \subset
  V(T)$ so that $\sigma \in S$ and $\sigma \in T$ as $S$ and $T$ are
  closed.  

  \noindent \eqref{lemma:meet2} Let $R$ be a connected component of $S
  \cap T$. Let $S_0$ be the connected component of $S$ containing $R$
  and $T_0$ be the connected component of $T$ containing $R$.  Then $R
  \subset S_0 \cap T_0$.  Suppose that $\sigma \in F^s(K)$ is an
  $s$-simplex with $\sigma \subset V(R)$. Then $\sigma \subset V(S_0
  \cap T_0)$ so $\sigma \in S_0 \cap T_0$ by \eqref{lemma:meet1} as
  the connected components $S_0$ and $T_0$ are assumed to be closed.
  In particular, $\sigma \in S \cap T$. According to
  Lemma~\ref{lemma:closedcomp}, the connected component $R$ is closed.
\end{proof}

\begin{defn}\label{defn:LsK}
  The $s$-chromatic lattice of $K$ is the set $L^s(K)$ of all subsets
  of $F^s(K)$ with closed connected components.  $L^s(K)$ is a
  partially ordered by set inclusion.
\end{defn}

The set $L^s(K)$ contains the empty set $\emptyset$ of $s$-simplices
and the set $F^s(K)$ of all $s$-simplices. These two elements of
$L^s(K)$ are distinct when $K$ has dimension at least $s$.

\begin{cor}
  $L^s(K)$ is a finite lattice with $\widehat 0 = \emptyset$,
  $\widehat 1 = F^s(K)$, and meet $S \wedge T = S \cap T$. 
\end{cor}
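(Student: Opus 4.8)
The plan is to verify the claim in four short moves and then invoke the standard fact that a finite meet-semilattice with a top element is automatically a lattice. First I would dispose of finiteness: since $K$ is a finite complex, $F^s(K)$ is a finite set, so its power set is finite and $L^s(K) \subseteq 2^{F^s(K)}$ is finite as well.

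Next I would identify the two bounds. The empty set $\emptyset$ has no connected components, so the closedness condition holds vacuously and $\emptyset \in L^s(K)$; being contained in every subset, it is the minimum $\widehat 0$. For the top, I would apply Lemma~\ref{lemma:closedcomp} to each connected component $S_0$ of $F^s(K)$: the inclusion $\{\sigma \in F^s(K) \mid \sigma \subset V(S_0)\} \subset F^s(K)$ is trivially satisfied, so every component of $F^s(K)$ is closed and $F^s(K) \in L^s(K)$; it contains every element of $L^s(K)$, hence is the maximum $\widehat 1$. Then I would establish the meet. Lemma~\ref{lemma:meet}\eqref{lemma:meet2} says exactly that if $S$ and $T$ lie in $L^s(K)$ then so does $S \cap T$. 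Since $S \cap T \subseteq S$ and $S \cap T \subseteq T$, it is a lower bound for $\{S,T\}$ in the poset $L^s(K)$; and any $R \in L^s(K)$ with $R \subseteq S$ and $R \subseteq T$ satisfies $R \subseteq S \cap T$, so $S \cap T$ is the greatest lower bound. Thus meets exist and are computed by intersection, $S \wedge T = S \cap T$.

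Finally, to upgrade the meet-semilattice to a lattice I would produce joins by the usual construction. The set of common upper bounds $U^s(S,T) = \{U \in L^s(K) \mid S \subseteq U,\ T \subseteq U\}$ is nonempty because $\widehat 1 = F^s(K)$ belongs to it, and it is finite; its meet, which exists by the previous step applied finitely often, is the least upper bound $S \vee T$. Hence both binary operations exist and $L^s(K)$ is a lattice.

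I expect the only genuinely content-bearing step to be the closure of $L^s(K)$ under intersection, and that is already discharged by Lemma~\ref{lemma:meet}\eqref{lemma:meet2}. The one point worth flagging --- and the reason the statement records a formula for the meet but not the join --- is that the join is emphatically not the union $S \cup T$: merging components can destroy closedness, so $S \cup T$ need not lie in $L^s(K)$, which is precisely why the join must be taken as the meet of the nonempty finite family $U^s(S,T)$ of common upper bounds.
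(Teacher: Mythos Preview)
Your proof is correct and follows essentially the same approach as the paper: invoke Lemma~\ref{lemma:meet}\eqref{lemma:meet2} to show $L^s(K)$ is closed under intersection, then use the standard fact (cited in the paper as \cite[Proposition~3.3.1]{stanley97}, spelled out by you) that a finite meet-semilattice with a maximum is a lattice, with the join realized as the intersection of all common upper bounds. You give more detail in verifying $\emptyset, F^s(K) \in L^s(K)$ and in unwinding the join construction, but the substance is the same.
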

\begin{proof}
  If $S,T \in L^s(K)$ then $S \cap T$ is also in $L^s(K)$ by
  Lemma~\ref{lemma:meet} and this is clearly the greatest lower
  bound of $S$ and $T$. It is now a standard result that $L^s(K)$ is a
  finite lattice \cite[Proposition 3.3.1]{stanley97}. The join $S \vee
  T$ of $S,T \in L^s(K)$ is the intersection of all supersets $U \in
  L^s(K)$ of $S \cup T$.
\end{proof}


\begin{exmp}[The $s$-chromatic lattice {$L^s(D[m])$}]
  \label{exmp:L2Dn+} The closed and connected elements of the
  $s$-chromatic lattice $L^s(D[m])$ of the complete simplex $D[m]$ on
  $m>s$ vertices are $\emptyset$ and the $\binom mk$ sets $F^s(D[k])$
  of all $s$-simplices in the subcomplexes $D[k]$ for $s < k \leq m$.
  The map $S \to \pi(S)$ is an iso\m\ between the lattice $L^s(D[m])$
  and the lattice, ordered by refinement, of all partitions of the set
  $[m]$ into blocks of size $>s$ or $1$.  The least element, $\widehat
  0 = (1) \cdots (m)$, is the partition with $m$ blocks and the
  greatest element, $\widehat 1 = (1\cdots m)$, the partition with $1$
  block.  $L^s(D[m])$ is not a graded lattice \cite[p 99]{stanley97}
  in general when $s \geq 2$. To see this, observe that the
  $2$-chromatic lattices $L^2(D[3])$, $L^2(D[4])$, and $L^2(D[4])$ are
  graded but the lattice $L^2(D[6])$ is not graded as it contains two
  maximal chains
  \begin{gather*}
    \widehat 0 = (1)(2)(3)(4)(5)(6) < (123)(4)(5)(6) < 
    (1234)(5)(6) < (12345)(6) < (123456) = \widehat 1 \\
     \widehat 0 = (1)(2)(3)(4)(5)(6) < (123)(4)(5)(6) < (123)(456) < 
    (123456) = \widehat 1
  \end{gather*} 
  of unequal length. In contrast, the $1$-chromatic lattice of any
  finite simplicial complex is always graded and even geometric
  \cite[\S9, Lemma~1]{rota64}. 
\end{exmp}



  \begin{rmk}[The \Mb\ function for the $s$-chromatic lattices
    {$L^s(D[m])$}] \label{rmk:wlattice} Our discussion of the \Mb\
    function for the lattice $L^s(D[m])$ echoes the exposition of the
    \Mb\ function for the geometric lattice $L^1(D[m])$ of {\em all\/}
    partitions from \cite[Example 3.10.4]{stanley97}.

  Let $w \colon [m] \to \N$ be a function that to every element of
  $[m]$ associates a natural number, thought of as a weight function.
  We write $w=1^{i_1}2^{i_2}\cdots r^{i_r}$, or something similar, for
  the weight function $w$ defined on the set $[m]$ of cardinality $m =
  \sum_j i_j$ and mapping $i_j$ elements to $j$ for $1 \leq
  j \leq r$. The map $w$ extends to a map, also called $w$, defined on
  the set of all nonempty subsets $X$ of $[m]$ given by $w(X) =
  \sum_{x \in X}w(x)$.  Let $L^s_m(w)$ be the lattice of all
  partitions of the set $[m]$ into blocks $X$ that are singletons or
  have weight $w(X)>s$. The non-singleton blocks of the meet $\sigma
  \wedge \tau$ of two partitions $\sigma, \tau \in L^s_m(w)$ are the
  subsets of weight $>s$ of the form $S \cap T$ where $S$ is a block
  in $\sigma$ and $T$ a block in $\tau$.  Write $\mu_m^s(w)$ for the
  \Mb\ function of $L^s_m(w)$.

  In particular, $L^s_m(1^m)$ is a synonym for $L^s(D[m])$ and we are
  primarily interested in the \Mb\ function $\mu_m^s(1^m)$ of the
  uniform weight $w=1^m$. However, the computation of this \Mb\
  function will involve the \Mb\ functions of other weights as
  well. We shall therefore discuss the \Mb\ functions  $\mu_m^s(w)$
  for general weight functions $w$.

  Suppose that $\sigma \in L^s_m(w)$, $\sigma < \widehat 1$, is a
  partition of $[m]$ into singleton blocks or blocks of weight $>s$.
  Let $w(\sigma)$ be the restriction of $w$ to the set of blocks of
  $\sigma$. Thus $w(\sigma)(X) = \sum_{x \in X}w(x)$ for any block $X$
  of $\sigma$. Then the interval
  \begin{equation*}
   L^s_m(w) \supset [\sigma, \widehat 1] = L^s_{|\sigma|} (w(\sigma))
  \end{equation*}
  so that $\mu^s_m(w)(\sigma,\widehat 1) =
  \mu^s_{|\sigma|}(w(\sigma))(\widehat 0, \widehat 1)$.  More
  generally, suppose that $\sigma < \tau$ for some $\tau \in
  L^s_m(w)$. Assume that the partition $\tau$ has blocks $\tau_j$.
  Let $\sigma_j$ be the set of those blocks of $\sigma$ 
  that intersect the block $\tau_j$ of $\tau$. Let $w(\sigma_j)$ be the
  restriction of $w(\sigma)$ to $\sigma_j$. Then the interval
  \begin{equation*}
    L^s_m(w) \supset [\sigma,\tau] =  \prod_j L^s_{|\sigma_j|}(w(\sigma_j)) 
  \end{equation*}
  and therefore the value of the \Mb\ function on the pair
  $(\sigma,\tau)$ 
  \begin{equation*}
    \mu^s_m(w)(\sigma,\tau) = \prod_j 
    \mu^s_{|\sigma_j|}(w(\sigma_j))(\widehat 0, \widehat 1) 
  \end{equation*}
  by the product theorem for \Mb\ functions \cite[Proposition
  3.8.2]{stanley97}. We conclude that the complete \Mb\ functions on
  all the lattices $L^s_m(w)$, are actually determined by the values
  $\mu^s_m(w)(\widehat 0, \widehat 1)$ of these \Mb\ functions on just
  $(\widehat 0, \widehat 1)$. See Equation \eqref{eq:umsw01} for more
  information about these \euc s.

  For the following it is convenient to name the elements of the
  domain $[m]$ of $w$ so that the element $m$ carries minimal weight.
  Assume that $a_m = (1 \cdots m-1)(m)$ is an element of $L^s_m(w)$,
  ie that $w(1)+\cdots+w(m-1)>s$. We shall determine the set of
  lattice elements $x$ with $x \wedge a_m = \widehat 0$. There is only
  one solution to this equation with $x \leq a_m$ and that is $x =
  \widehat 0$. As the other solutions satisfy $x \nleq a_m$, they must
  have a block that contains $m$ and at least one other element. It
  follows that the solutions $x \neq \widehat 0$ are all elements of
  the form
  \begin{equation*} 
    x = (x_1 \cdots x_tm)(\cdot) \cdots (\cdot) \text{\ with\ }
    \begin{cases}
      w(x_1) > s-w(m) & t=1 \\
      s \geq w(x_1)+ \cdots + w(x_t) > s-w(m) & t>1 
    \end{cases}
  \end{equation*}
  where all blocks but the unique block containing $m$ are singletons.
  There are $t+1$ elements in the block containing $m$ where $t$ is
  some number in the range $1 \leq t \leq s$. (All the solutions $x
  \neq \widehat 0$ are atoms in the lattice $L^s_m(w)$.) Since we are
  in a lattice, the \Mb\ function satisfies the equation
  \cite[Corollary 3.9.3]{stanley97}
  \begin{equation*}
    \mu^s_m(w)(\widehat 0, \widehat 1) = 
     -\sum_{\substack{ x \wedge a_m = \widehat 0 \\ x \neq \widehat
         0}} \mu^s_m(w)(x,\widehat 1)
  \end{equation*}
 which translates to
  \begin{multline}\label{eq:mblattice}
    \mu^s_m(w)(\widehat 0, \widehat 1)  = 
    -\sum_{\substack{ x \wedge a_m = \widehat 0 \\ x \neq \widehat 0}} 
    \mu^s_{|x|}(w(x))(\widehat 0, \widehat 1) = \\
    - \sum_{\substack{1 \leq x_1 \leq m-1 \\ w(x_1)>s-w(m)}}
    \mu^s_{m-1}(w(x_1m)w(\cdot)\cdots w(\cdot))(\widehat 0, \widehat 1)
    - \sum_{1 < t \leq s} 
   \sum_{\substack{1 \leq x_1,\ldots,x_t \leq m-1 \\ 
    s \geq w(x_1)+\cdots+w(x_t) > s-w(m)}} 
   \mu^s_{m-t}(w(x_1\cdots
   x_tm))w(\cdot)\cdots w(\cdot))(\widehat 0, \widehat 1) 
  \end{multline}
  This describes a recursive procedure for computing all values of the
  \Mb\ function on the weight lattices $L^s_m(w)$.  

  As an illustration we compute $\mu^2_6(1^6)(\widehat 0, \widehat
  1)$. Using \eqref{eq:mblattice} twice gives
  \begin{equation*}
   \mu^2_6(1^6)(\widehat 0, \widehat 1) = 
   -10 \mu^2_4(3111)(\widehat 0, \widehat 1) =
   10( \mu^2_3(411)(\widehat 0, \widehat 1) + \mu^2_2(33)(\widehat 0,
   \widehat 1) )
  \end{equation*}
  The lattices $L^2_4(411)$ and $L^2_2(33)$
   have $4$ and $2$ elements, respectively, and they look like
   \begin{center}
     \begin{tikzpicture}
  [vertex/.style= {shape=circle,  fill=#1!100, minimum size =
  4pt, inner sep =0pt,draw}, vertex/.default=black]
  
  \node at (-4.5,1) {$L^2_3(411) \colon$};
  \draw [thick] (0,0)  -- (-1,1) -- (0,2) -- (1,1)  -- (0,0) -- cycle;
  \node at (0,0) [vertex, label = below:{$\mu(\widehat 0, \cdot)=1$}] {}; 
  \node at (-1,1) [vertex, label = left:{$\mu(\widehat 0, \cdot)=-1$}] {};
  \node at (0,2) [vertex, label = above:{$\mu(\widehat 0, \cdot)=1$}] {}; 
  \node at (1,1) [vertex,label = right:{$\mu(\widehat 0, \cdot)=1$}] {};

  \node at (4.5,1) {$L^2_2(33) \colon$};
  \draw [thick] (6,0) node[vertex,label = below:{$\mu(\widehat 0,
    \cdot)=1$}] {} --  
  (6,2) node[vertex,label = above:{$\mu(\widehat 0, \cdot)=-1$}] {};
\end{tikzpicture}
\end{center}
so that $\mu^2_3(411)(\widehat 0, \widehat 1) = 1$ and
$\mu^2_2(33)(\widehat 0, \widehat 1) = -1$. Therefore
$\mu^2_6(1^6)(\widehat 0, \widehat 1)=0$. 
\end{rmk}

We remind the reader of the well-known fact that $\mu_m^s(w)(\widehat
0, \widehat 1)$ is the reduced \euc\ of the open interval 
$L^s_m(w)(\widehat 0, \widehat 1)$ between $\widehat 0$ and $\widehat
1$  in the lattice $L^s_m(w)$.

\begin{prop}\cite[\S 6]{rota64} \cite[Proposition 3.8.5]{stanley97}
  Let $x<y$ be two elements in a finite poset. The value of the \Mb\
  function on the pair $(x,y)$ is the reduced \euc\ of the open
  interval $(x,y)$.
\end{prop}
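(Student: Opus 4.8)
The plan is to deduce this from Philip Hall's description of the \Mb\ function via the incidence algebra, and then match the resulting chain count with the face numbers of the order complex of the open interval $P(x,y)$.

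First I would work in the incidence algebra of the finite poset $P$, whose product is convolution. Let $\zeta$ be the zeta function, with $\zeta(u,v)=1$ when $u \leq v$ and $0$ otherwise, let $\delta$ be the identity (the Kronecker delta), and set $\eta = \zeta - \delta$, so that $\eta(u,v)=1$ precisely when $u<v$. A direct computation of the convolution power shows that $\eta^k(x,y)$ equals the number $a_k$ of chains $x = z_0 < z_1 < \cdots < z_k = y$ of length $k$. Since $P$ is finite, $\eta$ is nilpotent, so the \Mb\ function, being the convolution inverse of $\zeta = \delta + \eta$, has the finite expansion
\begin{equation*}
  \mu = (\delta+\eta)^{-1} = \sum_{k \geq 0}(-1)^k \eta^k,
\end{equation*}
and evaluating at $(x,y)$ yields Philip Hall's formula $\mu(x,y) = \sum_{k \geq 0}(-1)^k a_k$.

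Next I would translate these chain counts into the face numbers of the order complex $\Delta$ of $P(x,y)$, whose simplices are the finite chains of $P(x,y)$. A chain $x = z_0 < \cdots < z_k = y$ of length $k$ has interior $z_1 < \cdots < z_{k-1}$, a chain of $k-1$ elements lying strictly between $x$ and $y$, hence a $(k-2)$-dimensional face of $\Delta$, and conversely every $(k-2)$-face arises uniquely this way. Thus $a_k = f_{k-2}(\Delta)$, where $f_{-1}=1$ records the empty face (coming from the single length-$1$ chain $x<y$) and $a_0 = 0$ because $x<y$. Reindexing with $j = k-2$ gives
\begin{equation*}
  \mu(x,y) = \sum_{k \geq 1}(-1)^k a_k = \sum_{j \geq -1}(-1)^{j} f_j(\Delta),
\end{equation*}
and the right-hand side is exactly the reduced \euc\ of $P(x,y)$.

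The ideas here are entirely standard; the only place demanding care is the bookkeeping. One must retain the empty chain, i.e. the $(-1)$-dimensional face, so that the length-$1$ chain $x<y$ supplies the $+1$ that separates the reduced from the unreduced \euc, and one must check that the sign $(-1)^k$ attached to a length-$k$ chain agrees with the sign $(-1)^{k-2}=(-1)^k$ attached to its associated $(k-2)$-face. Once the index shift $j=k-2$ is made, the two alternating sums agree term by term, which finishes the proof.
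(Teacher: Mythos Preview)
Your proof is correct and is the standard direct argument via Philip Hall's formula. The paper takes a different route: it uses the identity $\mathrm{E}(Q) = \sum_{a \leq b \in Q} \mu(a,b)$ (itself a consequence of Hall's formula summed over all pairs) for both the closed interval $[x,y]$ and the open interval $(x,y)$, notes that $\mathrm{E}([x,y]) = 1$ because $[x,y]$ has a least element, and then splits the sum over $[x,y]$ into the sum over $(x,y)$ plus the boundary contributions $\sum_{b \in [x,y]} \mu(x,b)$ and $\sum_{a \in [x,y]} \mu(a,y)$, which both vanish by the defining recursion for $\mu$; the leftover $-\mu(x,y)$ corrects for the double count of the pair $(x,y)$. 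Your approach is more self-contained and makes the chain--face bijection explicit, which is pedagogically cleaner; the paper's approach is slicker once the global identity is granted and has the pleasant feature of isolating exactly where the contractibility of the closed interval enters.
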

\begin{proof}
  Write $\mu$ be the \Mb\ function of $P$ and $\mathrm{E}$ for \euc
  . The closed interval from $x$ to $y$ has \euc\ $1$ since it has a
  smallest element. Thus
  \begin{multline*}
    1 = \mathrm{E}([x,y]) = \sum_{a,b \in [x,y]} \mu(a,b) =
     \sum_{a,b \in (x,y)} \mu(a,b) + \sum_{a \in [x,y]} \mu(a,y) +
      \sum_{b \in [x,y]} \mu(x,b) - \mu(x,y) \\
      = \mathrm{E}((x,y)) + 0 + 0 -\mu(x,y) = \mathrm{E}((x,y))-\mu(x,y)
  \end{multline*}
  or $\mu(x,y) = \widetilde{\mathrm{E}}((x,y))$.
\end{proof}

For $1 \leq s \leq m+1$ let $B(m,s)$ be the graded poset of 
nonempty subsets of $[m]$ of
cardinality less than $s$. 

\begin{lemma}\label{lemma:Bms}
  The reduced \euc\ of $B(m,s)$ is
  \begin{equation*}
    \widetilde E(B(m,s)) = (-1)^s \binom {m-1}{s-1},  \qquad
    1 \leq s\leq m+1
  \end{equation*}
\end{lemma}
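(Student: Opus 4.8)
The plan is to reduce the reduced \euc\ of $B(m,s)$ to a single value of a \Mb\ function and then evaluate that value by a standard alternating binomial sum. Adjoin a bottom element $\widehat 0 = \emptyset$ and a formal top element $\widehat 1$ to $B(m,s)$, obtaining a finite poset $\widehat B = \{\widehat 0\}\cup B(m,s)\cup\{\widehat 1\}$ in which $\widehat 0$ lies below and $\widehat 1$ above every subset. The open interval $(\widehat 0,\widehat 1)$ in $\widehat B$ is exactly $B(m,s)$, so the preceding Proposition identifying the \Mb\ function with the reduced \euc\ of an open interval gives $\widetilde E(B(m,s)) = \mu_{\widehat B}(\widehat 0,\widehat 1)$. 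It therefore suffices to compute this one \Mb\ number.

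First I would describe the lower intervals in $\widehat B$. For a nonempty $A\subseteq[m]$ with $|A|<s$, every subset of $A$ again has cardinality $<s$, hence lies in $B(m,s)\cup\{\widehat 0\}$; consequently the interval $[\widehat 0,A]$ in $\widehat B$ is the full Boolean lattice of subsets of $A$. By the well-known value of the \Mb\ function of a Boolean lattice, $\mu_{\widehat B}(\widehat 0,A) = (-1)^{|A|}$. This is the step where the hypothesis $|A|<s$ is genuinely used, and it is the only place that requires care.

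Next I would invoke the defining recursion $\sum_{\widehat 0\le x\le\widehat 1}\mu_{\widehat B}(\widehat 0,x)=0$, valid since $\widehat 0\neq\widehat 1$, rewritten as $\mu_{\widehat B}(\widehat 0,\widehat 1)=-\sum_{\widehat 0\le x<\widehat 1}\mu_{\widehat B}(\widehat 0,x)$. The elements $x$ with $\widehat 0\le x<\widehat 1$ are $\widehat 0$ itself, contributing $\mu(\widehat 0,\widehat 0)=1$, together with the subsets $A$ of each cardinality $k$ with $1\le k\le s-1$, of which there are $\binom mk$, each contributing $(-1)^k$. Collecting terms gives
\[
  \widetilde E(B(m,s)) = -\Bigl(1+\sum_{k=1}^{s-1}\binom mk(-1)^k\Bigr) = -\sum_{k=0}^{s-1}(-1)^k\binom mk .
\]

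The computation then closes with the partial alternating-sum identity $\sum_{k=0}^{j}(-1)^k\binom mk=(-1)^j\binom{m-1}{j}$, proved by an immediate induction on $j$ via Pascal's rule, applied with $j=s-1$; this yields $\widetilde E(B(m,s))=-(-1)^{s-1}\binom{m-1}{s-1}=(-1)^s\binom{m-1}{s-1}$, as claimed. I do not expect a serious obstacle: the main thing to verify is that the boundary values behave correctly, namely $s=1$ (where $B(m,s)$ is empty, the sum has its single term $k=0$, and $\widetilde E=-1$) and $s=m+1$ (where the full alternating sum vanishes and $\binom{m-1}{m}=0$), both of which the formula handles automatically. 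As a conceptual cross-check one may observe that the order complex of $B(m,s)$ is the barycentric subdivision of the $(s-2)$-skeleton of the $(m-1)$-simplex on vertex set $[m]$, whose reduced \euc\ is precisely the same alternating sum of binomial coefficients.
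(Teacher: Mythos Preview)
Your proof is correct but follows a genuinely different route from the paper's. The paper argues by induction on $s$: it observes that the chains in $B(m,s)$ not already in $B(m,s-1)$ are exactly those whose top element has cardinality $s-1$, counts their signed contribution as $(-1)^s\binom{m}{s-1}$, and obtains the recurrence $\widetilde E(B(m,s)) = \widetilde E(B(m,s-1)) + (-1)^s\binom{m}{s-1}$, which sums via Pascal's rule to the stated binomial. You instead adjoin $\widehat 0,\widehat 1$, invoke the Proposition immediately preceding this lemma to identify $\widetilde E(B(m,s))$ with $\mu_{\widehat B}(\widehat 0,\widehat 1)$, note that every proper lower interval is Boolean so that $\mu(\widehat 0,A)=(-1)^{|A|}$, and close with the partial alternating-sum identity. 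Your approach is shorter and more structural, and it has the pleasant feature of actually \emph{using} the Proposition the paper has just proved; the paper's approach is a direct chain count that is independent of that Proposition. Both land on the same alternating binomial sum, and your remark that the order complex of $B(m,s)$ is the barycentric subdivision of the $(s-2)$-skeleton of $\Delta^{m-1}$ makes the identity transparent geometrically.
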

\begin{proof}
   It is rather easy to get the recurrence relation
   \begin{align*}
    &E(B(m,2)) = m \\
    &E(B(m,s)) = 
   E(B(m,s-1)) + \binom m{s-1} \sum_{j=1}^{s-1} (-1)^{s-1-j}
   \binom{s-1}j, \qquad 2<s<2+m
  \end{align*}
  Since the sum of binomial coefficients has value $(-1)^s$, we
  get the recurrence relation
  \begin{align*}
     &\widetilde E(B(m,2)) = m-1 \\
    &\widetilde E(B(m,s)) = 
   \widetilde E(B(m,s-1)) + (-1)^s\binom m{s-1},  \qquad 2<s<2+m
  \end{align*}
  for the reduced \euc . The claim of the lemma follows immediately. 
\end{proof}

\begin{exmp}[Reduced \euc s of the $s$-chromatic lattice intervals
  $L^s_m(w)(\widehat 0, \widehat 1)$]
  The reduced \euc s $\mu_m^s(1^m)(\widehat 0, \widehat 1) =
  \widetilde E(L^s_m(1^m)(\widehat 0, \widehat 1))$, $m \geq s+2$, for
  $s=1,2,\ldots,8$ are
\begin{align*}
&2, -6, 24, -120, 720, -5040, 40320, -362880, 3628800, -39916800,
479001600, -6227020800, 87178291200, \ldots \\
& 3, -6, 0, 90, -630, 2520, 0, -113400, 1247400, -7484400, 0,
681080400, -10216206000, 81729648000, \ldots \\
& 4, -10, 20, -70, 560, -4200, 25200, -138600, 924000, -8408400,
84084000, -798798000, 7399392000, \ldots \\
& 5, -15, 35, -70, 0, 2100, -23100, 173250, -1051050, 5255250,
-15765750, -105105000, 2858856000,\ldots \\
& 6, -21, 56, -126, 252, -924, 11088, -126126, 1093092,
-7693686, 46414368, -254438184, 1492322832, \ldots \\ 
& 7, -28, 84, -210, 462, -924, 0, 42042, -630630,
6390384, -51459408, 351639288, -2118412296,11406835440\ldots \\
&   8, -36, 120, -330, 792, -1716, 3432, -12870,
205920, -3150576, 35706528,-322583976,2460949920 \ldots \\
&  9, -45, 165, -495, 1287, -3003, 6435, -12870, 0,
787644, -14965236,191222460, -1920538620 \ldots
\end{align*}
The first sequence, $\mu_m^1(1^m)(\widehat 0,
\widehat 1)$, $m \geq 2$, is the sequence $(-1)^{m-1}(m-1)!$ of
reduced \euc s of the lattice of partitions of $[m]$ \cite[Example
3.10.4]{stanley97}.  The second sequence, $\mu_m^2(1^m)(\widehat 0,
\widehat 1)$, $m \geq 3$, seems to coincide with first terms of the
sequence \href{http://oeis.org/A009014}{A009014} from The On-Line
Encyclopedia of Integer Sequences (OES). The remaining $6$ sequences
apparently do not match any sequences of the OES.

The first $s$ terms of these sequences are signed binomial
coefficients. This is because the interval $(\widehat 0, \widehat 1)$
in $L^s(D[m])$ is isomorphic to the opposite of the poset $B(m,m-s)$
when $s+2 \leq m \leq 2s+1$. Thus the reduced \euc\
\begin{equation*}
  \mu_m^s(1^m)(\widehat 0, \widehat 1) = \widetilde E (B(m,m-s)) =
  (-1)^{m-s} \binom {m-1}s, \qquad s+2 \leq m \leq 2s+1,
\end{equation*}
according to Lemma~\ref{lemma:Bms}.

The first terms of the sequence $\mu_m^2(3^11^{m-1})(\widehat 0,
\widehat 1)$, $m \geq 3$, of reduced \euc s of the weighted lattice intervals
$L^2_m(3^11^{m-1})(\widehat 0,
\widehat 1)$, 
\begin{equation*}
  1, 0, -6, 30, -90, 0, 2520, -22680, 113400, 0, -7484400, 97297200,
  -681080400, 0, 81729648000, -1389404016000, \ldots
\end{equation*}
seem to coincide up to sign with first terms of the sequence
\href{http://oeis.org/A009775}{A009775} from OES.  The sequence of
reduced \euc s $\mu_m^2(3^21^{m-2})(\widehat 0, \widehat 1)$, $m \geq
3$, of the lattice interval $L^2_m(3^21^{m-2})(\widehat 0, \widehat
1)$,
\begin{multline*}
2, -4, 6, 6, -120, 720, -2520, -2520, 136080, -1360800, 7484400,
7484400, \\ -778377600, 10897286400, -81729648000, -81729648000, 
 13894040160000, \ldots
\end{multline*}
apparently does not match any sequence in the OES.
\end{exmp}


Define the {\em $s$-monochrome set\/} of a map $\mathrm{col} \colon
V(K) \to [r]=\{1,\ldots,r\}$ to be the set
\begin{equation*}
  M^s(\mathrm{col}) = \{ \sigma \in F^s(K) \mid |\mathrm{col}(\sigma)| = 1 \}
\end{equation*}
of all monochrome $s$-simplices in $K$.  The map $\mathrm{col}$ is an
$(r,s)$-coloring of $K$ if and only if
$M^s(\mathrm{col})=\emptyset$.

\begin{lemma}\label{lemma:Mscol}
  The $s$-monochrome set $M^s(\mathrm{col})$ of any map $\mathrm{col}
  \colon V(K) \to [r]$ is an element of the $s$-chromatic lattice
  $L^s(K)$.
\end{lemma}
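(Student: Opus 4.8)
The plan is to unwind the definition of the $s$-chromatic lattice. By Definition~\ref{defn:LsK}, membership $M^s(\mathrm{col}) \in L^s(K)$ means exactly that every connected component of the set $M^s(\mathrm{col}) \subset F^s(K)$ is closed. So I would fix an arbitrary connected component $S_0$ of $S := M^s(\mathrm{col})$ and aim to prove that $S_0$ is closed. Rather than checking the defining equation directly, I would invoke Lemma~\ref{lemma:closedcomp}: it suffices to show the inclusion $\{\sigma \in F^s(K) \mid \sigma \subset V(S_0)\} \subset S$, that is, every $s$-simplex of $K$ whose vertices all lie in $V(S_0)$ is again monochrome under $\mathrm{col}$.

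The key observation driving everything is that a connected component of the monochrome set is \emph{monochrome of a single color}. First I would note that each $\sigma \in S_0$, being monochrome, has $|\mathrm{col}(\sigma)|=1$, so it carries a well-defined color $c(\sigma)$. If two simplices $s_1, s_2 \in S_0$ satisfy $s_1 \cap s_2 \neq \emptyset$, then choosing a shared vertex $v \in s_1 \cap s_2$ gives $c(s_1) = \mathrm{col}(v) = c(s_2)$, so adjacent simplices share a color. Since the connected component $S_0$ is by definition an equivalence class under the transitive closure $\sim$ of the intersection relation, this forces $c$ to be constant on $S_0$, say with common value $c$. Consequently every vertex of $V(S_0) = \bigcup S_0$ lies in some simplex of $S_0$ and therefore has color $c$; in other words $V(S_0)$ is entirely of color $c$.

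With this in hand the inclusion is immediate: if $\sigma \in F^s(K)$ satisfies $\sigma \subset V(S_0)$, then all vertices of $\sigma$ have color $c$, so $|\mathrm{col}(\sigma)| = 1$ and $\sigma \in M^s(\mathrm{col}) = S$. Lemma~\ref{lemma:closedcomp} then yields that $S_0$ is closed, and since $S_0$ was an arbitrary component we conclude $M^s(\mathrm{col}) \in L^s(K)$. I do not expect any serious obstacle here; the only point requiring care is the propagation argument in the second step, where one must use that $S_0$ is generated as the transitive closure of the intersection relation (not merely that adjacent simplices agree) to conclude that the color is globally constant on the component. Everything else is a routine translation of the definitions.
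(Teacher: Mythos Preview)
Your proposal is correct and follows exactly the same route as the paper's proof: fix a connected component, observe that connectedness forces all its vertices to share a single color, deduce that any $s$-simplex supported on that vertex set is monochrome, and conclude via Lemma~\ref{lemma:closedcomp}. The only difference is that the paper compresses your second paragraph into the single sentence ``Since $S$ is connected, all vertices in $S$ have the same color,'' whereas you spell out the transitive-closure propagation explicitly.
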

\begin{proof}
  Let $S$ be a connected component of $M^s(\mathrm{col})$. Since $S$ is
  connected, all vertices in $S$ have the {\em same\/} color. Let
  $\sigma \in F^s(K)$ be an $s$-simplex of $K$ such that $\sigma
  \subset V(S)$. The $\sigma$ is monochrome: $\sigma \in
  M^s(\mathrm{col})$. By Lemma~\ref{lemma:closedcomp}, $S$ is closed.
\end{proof}

\begin{thm}\label{thm:LsK}
  The number of $(r,s)$-colorings of $K$ is
  \begin{equation*}
    \chrps Krs=\sum_{T \in L^s(K)} \mu(\widehat 0,T)  r^{|\pi(T)|}
  \end{equation*}
 where $\mu$ the \Mb\ function for the $s$-chromatic lattice $L^s(K)$.
\end{thm}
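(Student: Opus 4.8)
The plan is to follow Rota's method via \Mb\ inversion over the lattice $L^s(K)$. First I would attach to each map $\mathrm{col} \colon V(K) \to [r]$ its $s$-monochrome set $M^s(\mathrm{col})$, which by Lemma~\ref{lemma:Mscol} is an element of $L^s(K)$; the $(r,s)$-colorings are exactly the maps with $M^s(\mathrm{col}) = \widehat 0 = \emptyset$. For $T \in L^s(K)$ let $f(T)$ be the number of maps with $M^s(\mathrm{col}) = T$ and let $g(T)$ be the number of maps with $M^s(\mathrm{col}) \geq T$ in the inclusion order. Since every map has a monochrome set lying in $L^s(K)$, the quantity $f(S)$ vanishes off $L^s(K)$, and summing $f$ over the principal filter above $T$ gives $g(T) = \sum_{S \geq T} f(S)$, a relation taking place entirely within $L^s(K)$.

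The key computation is to identify $g(T)$. I claim that a map $\mathrm{col}$ satisfies $T \subseteq M^s(\mathrm{col})$ if and only if $\mathrm{col}$ is constant on each block of the partition $\pi(T)$; consequently $g(T) = r^{|\pi(T)|}$, since such maps are in bijection with arbitrary color assignments to the $|\pi(T)|$ blocks. For the forward implication, every $s$-simplex $\sigma \in T$ lies in some connected component $S_0$ of $T$, whence $\sigma \subset V(S_0)$, which is a block of $\pi(T)$; constancy on that block makes $\sigma$ monochrome, so $\sigma \in M^s(\mathrm{col})$. For the reverse implication, the non-singleton blocks of $\pi(T)$ are precisely the vertex sets $V(S_0)$ of the connected components $S_0$ of $T$: since $S_0 \subseteq T \subseteq M^s(\mathrm{col})$ every simplex of $S_0$ is monochrome, and connectedness of $S_0$ propagates a single color across all of $V(S_0)$, while the singleton blocks are monochrome trivially.

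Finally I would apply \Mb\ inversion in the finite poset $L^s(K)$: from $g(T) = \sum_{S \geq T} f(S)$ one obtains $f(T) = \sum_{S \geq T} \mu(T,S)\,g(S)$. Evaluating at $T = \widehat 0$ then yields
\begin{equation*}
  \chrps Krs = f(\widehat 0) = \sum_{S \in L^s(K)} \mu(\widehat 0, S)\, r^{|\pi(S)|},
\end{equation*}
which is the asserted formula.

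I expect the main obstacle to be the equivalence in the second paragraph, and specifically its reverse implication: one must combine the fact that each connected component $S_0$ of $T$ consists of monochrome simplices with the fact that connectedness forces a single color on the whole vertex set $V(S_0)$. This is exactly the place where the definition of $\pi(T)$ through connected components, and the connectivity built into the elements of $L^s(K)$, are used. Closedness of the components plays its role one step earlier, through Lemma~\ref{lemma:Mscol}, by guaranteeing that $M^s(\mathrm{col})$ genuinely lands in $L^s(K)$ so that the inversion is carried out over the correct poset.
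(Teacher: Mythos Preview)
Your proof is correct and follows essentially the same route as the paper's: set up $f(T)=\#\{\mathrm{col}:M^s(\mathrm{col})=T\}$ and $g(T)=\#\{\mathrm{col}:M^s(\mathrm{col})\geq T\}=r^{|\pi(T)|}$, then apply \Mb\ inversion in $L^s(K)$ and evaluate at $\widehat 0$. One small slip: you have swapped the labels ``forward'' and ``reverse'' in the second paragraph (the argument you call forward actually proves that constancy on blocks implies $T\subseteq M^s(\mathrm{col})$, and vice versa), but both directions are present and correct.
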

\begin{proof}
  For any $B \in L^s(K)$, let $\chi(K,r,s,B)$ be the number of maps
  $\mathrm{col} \colon V(K) \to [r]$ with $M^s(\mathrm{col})=B$.  We
  want to determine $\chi(K,r,s,\emptyset) = \chrps sKr$.  For any $A
  \in L^s(K)$,
  \begin{equation*}
     r^{|\pi(A)|} =
     \sum_{A \leq B} \chi(K,r,s,B)                       
  \end{equation*}
  because there are $r^{|\pi_0(A)|} r^{m(K)-|V(A)|} =
  r^{|\pi(A)|}$ maps $\mathrm{col} \colon V(K) \to [r]$ with $A
  \leq M^s(\mathrm{col})$.  Equivalently,
  \begin{equation*}
    \sum_{A \leq B} \mu(A,B)  r^{|\pi(B)|} = \chi(K,r,s,A)
  \end{equation*}
  by \Mb\ inversion \cite[Proposition 3.7.1]{stanley97}. The statement
  of the theorem is the particular case of this formula where
  $A=\widehat 0$.
\end{proof}

The  defining rules for the \Mb\ function of the poset $L^s(K)$
\cite[3.7]{stanley97}  
\begin{itemize}
\item $\mu(S,S)=1$ for all $S \in L^s(K)$
\item $\sum_{R \leq S \leq T} \mu(R,S)=0$ when $R \lneqq T$
\item $\mu(R,S)=0$ when $R \nleq S$
\end{itemize}
imply that $\mu(\widehat 0,\widehat 0)=1$ and
$\mu(\widehat 0,\{\sigma\})=-1$ for every $s$-simplex $\sigma \in
F^s(K)$.

\begin{cor}\label{cor:LsK}
  The highest degree terms of the $s$-chromatic polynomial are
  \begin{equation*}
    \chrps Krs = r^{m(K)} - f_s(K)r^{m(K)-s} + \cdots 
  \end{equation*}
  Thus the  $s$-chromatic polynomial determines $f_0(K)$ and $f_s(K)$.
\end{cor}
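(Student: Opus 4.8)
The plan is to read off the two leading terms directly from the \Mb-function expansion in Theorem~\ref{thm:LsK} by sorting the summands according to the degree $|\pi(T)|$ of the monomial $r^{|\pi(T)|}$ that each $T \in L^s(K)$ contributes. The essential bookkeeping is the identity already recorded above, $|\pi(T)| = |\pi_0(T)| + m(K) - |V(T)|$, which I rewrite as
\begin{equation*}
  m(K) - |\pi(T)| = |V(T)| - |\pi_0(T)| = \sum_{S_0 \in \pi_0(T)} \bigl( |V(S_0)| - 1 \bigr),
\end{equation*}
the sum running over the connected components $S_0$ of $T$. Controlling the degree therefore reduces to bounding $|V(S_0)| - 1$ from below for each component.

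The heart of the argument is the claim that every nonempty connected component $S_0$ satisfies $|V(S_0)| - 1 \geq s$, with equality precisely when $S_0$ is a single $s$-simplex. A single $s$-simplex $\sigma$ has $|V(\sigma)| = s+1$, which is the equality case. If instead $S_0$ contains two or more simplices, then, being a single $\sim$-class, it contains two distinct simplices $s_1 \neq s_2$ with $s_1 \cap s_2 \neq \emptyset$; since two distinct $s$-simplices meet in at most $s$ vertices, $|V(\{s_1,s_2\})| = 2(s+1) - |s_1 \cap s_2| \geq s+2$, and hence $|V(S_0)| \geq s+2$, i.e.\ $|V(S_0)| - 1 \geq s+1 > s$. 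This overlap estimate (at most $s$ shared vertices for distinct $s$-simplices) is the only genuinely combinatorial point, and it is the one place I expect to state carefully.

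Summing over components then gives $m(K) - |\pi(T)| \geq s$ for every $T \neq \widehat 0$, with equality exactly for the singletons $T = \{\sigma\}$, $\sigma \in F^s(K)$. Consequently: the unique term of degree $m(K)$ comes from $T = \widehat 0 = \emptyset$ and, using $\mu(\widehat 0,\widehat 0)=1$, contributes $r^{m(K)}$; there is no term of degree strictly between $m(K)-s$ and $m(K)$; and the terms of degree exactly $m(K)-s$ are precisely those indexed by the $f_s(K) = |F^s(K)|$ single $s$-simplices, each carrying coefficient $\mu(\widehat 0,\{\sigma\}) = -1$ from the defining rules recalled just before the corollary. Adding them yields $-f_s(K)\,r^{m(K)-s}$, which establishes $\chrps Krs = r^{m(K)} - f_s(K)\,r^{m(K)-s} + \cdots$. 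Finally, since $f_0(K) = m(K)$ is the degree of the polynomial and $-f_s(K)$ is the coefficient of its second-highest term, both face numbers are recovered from $\chrps Krs$.
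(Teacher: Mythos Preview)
Your proof is correct and follows the same approach as the paper: both read off the leading terms from the \Mb\ expansion of Theorem~\ref{thm:LsK}, using $\mu(\widehat 0,\widehat 0)=1$ and $\mu(\widehat 0,\{\sigma\})=-1$. Your version is in fact more complete than the paper's, which simply asserts the displayed expansion without justifying the gap between degrees $m(K)$ and $m(K)-s$; your overlap estimate $|V(S_0)|\geq s+2$ for multi-simplex components supplies exactly that missing step.
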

\begin{proof}
  The $s$-chromatic polynomial is
  \begin{equation*}
    \chrps Krs = \mu(\widehat 0,\widehat 0)r^{f_0(K)} +
        \sum_{\sigma \in F^s(K)}\mu(\widehat 0,\{\sigma\})r^{f_0(K)-s}
        + \cdots
  \end{equation*}
  where $\mu(\widehat 0,\widehat 0)=1$ and  $\mu(\widehat
  0,\{\sigma\})=-1$ for all $s$-simplices $\sigma$ of $K$.
\end{proof}

\begin{example}
  Consider the $2$-dimensional complex $K$ from Example~\ref{exmp:K}.
The $2$-chromatic lattice $L^2(K)$ of $K$
\begin{equation*}
   \xy 0;/r.20pc/:
  (0,0)*@{*}="1"+(0,-5)*{\{1,2,3\}};
  (20,0)*@{*}="2"+(0,-5)*{\{4,5,6\}};
  (40,0)*@{*}="3"+(0,-5)*{\{2,3,4\}};
  (0,20)*@{*}="4"+(0,+5)*{
    \begin{array}{c}
      \{1,2,3\} \\ \{4,5,6\}
    \end{array}};
  (20,20)*@{*}="5"+(0,+5)*{
    \begin{array}{c}
      \{1,2,3\} \\ \{2,3,4\}
    \end{array}};
  (40,20)*@{*}="6"+(0,+5)*{
    \begin{array}{c}
      \{2,3,4\} \\ \{4,5,6\}
    \end{array}};
  (20,-20)*@{*}="0"+(0,-5)*{\widehat 0};
  (20,40)*@{*}="7"+(0,+5)*{\widehat 1};
   "1"; "4" **\dir{-};
   "1"; "5" **\dir{-};
   "2"; "4" **\dir{-};
   "2"; "6" **\dir{-};
   "3"; "5" **\dir{-};
   "3"; "6" **\dir{-};
   "0"; "1" **\dir{-};
   "0"; "2" **\dir{-};
   "0"; "3" **\dir{-};
   "7"; "4" **\dir{-};
   "7"; "5" **\dir{-};
   "7"; "6" **\dir{-};
   (90,-20)*{\mu(S)=+1 \qquad |\pi(S)| = 6\phantom{,4,4} };
   (90,0)*{\mu(S)=-1 \qquad |\pi(S)| = 4,4,4};
   (90,20)*{\mu(S)=+1 \qquad |\pi(S)| = 2,3,2};
   (90,40)*{\mu(S)=-1 \qquad |\pi(S)| = 1\phantom{,4,4}};
  \endxy 
\end{equation*}
consists of {\em all\/} subsets of $F^2(K)$.  The $2$-chromatic
polynomial is
  \begin{equation*}
    \chrps Kr2 = r^6 - r^4 - r^4 -r^4 + r^2 + r^3 + r^2 - r = 
           r^6 - 3r^4 + r^3 + 2r^2 -r 
  \end{equation*}
  $K$ has $\chrps K22 = 30$ $(2,2)$-colorings and $\chrps K32 = 528$
  $(3,2)$-colorings. 
\end{example}



\begin{exmp}\label{exmp:MB}
The triangulation $\mathrm{MB}$ of the \Mb\ band with $f$-vector
$f(\mathrm{MB}) = (5,10,5)$ shown in Figure~\ref{fig:MBcol}
has the following (reduced) $2$-chromatic lattice
$L^2(\mathrm{MB})-\{\widehat 0, \widehat 1\}$ 
  \begin{equation*}
   \xy 0;/r.20pc/:
  (0,0)*@{*}="2"+(0,-5)*{\{1,3,5\}};
  (20,0)*@{*}="4"+(0,-5)*{\{2,3,5\}};
  (40,0)*@{*}="3"+(0,-5)*{\{1,3,4\}};
  (60,0)*@{*}="5"+(0,-5)*{\{2,4,5\}};
  (80,0)*@{*}="6"+(0,-5)*{\{1,2,4\}};
  (0,20)*@{*}="11"+(0,5)*{
    \begin{array}[c]{c}
      \{1,3,5\} \\ \{2,3,5\}
    \end{array}};
  (20,20)*@{*}="10"+(0,5)*{
    \begin{array}[c]{c}
      \{1,3,5\} \\ \{1,3,4\}
    \end{array}};
  (40,20)*@{*}="8"+(0,5)*{
    \begin{array}[c]{c}
      \{2,4,5\} \\ \{2,3,5\}
    \end{array}};
  (60,20)*@{*}="9"+(0,5)*{
    \begin{array}[c]{c}
      \{1,2,4\} \\ \{1,3,4\}
    \end{array}};
  (80,20)*@{*}="7"+(0,5)*{
    \begin{array}[c]{c}
      \{2,4,5\} \\ \{1,2,4\}
    \end{array}};
   "5"; "7" **\dir{-};
   "6"; "7" **\dir{-};
   "4"; "8" **\dir{-};
   "5"; "8" **\dir{-};
   "3"; "9" **\dir{-};
   "6"; "9" **\dir{-};
   "2"; "10" **\dir{-};
   "3"; "10" **\dir{-};
   "2"; "11" **\dir{-};
   "4"; "11" **\dir{-};
   (130,0)*{\mu=-1 \qquad |\pi(S)| = 3,3,3,3,3};
   (130,20)*{\mu=+1 \qquad |\pi(S)| = 2,2,2,2,2};
  \endxy
  \end{equation*}
  and $2$-chromatic polynomial
  \begin{equation*}
    \chrps {\mathrm{MB}}r2 = r^5 - 5r^3 + 5r^2 -r 
  \end{equation*} 
  The lattice $L^2(\mathrm{MB})$ is graded but it is still not
  semi-modular \cite[Proposition 3.3.2]{stanley97}: The meet and join
  of $a=\{\{2,3,5\}\}$ and $b=\{\{1,3,4\}\}$ are $a \wedge b =
  \widehat 0$ and $a \vee b = \widehat 1$. Thus $a$ and $b$ cover $a
  \wedge b$ but $a \vee b$ covers neither $a$ nor $b$.  
\end{exmp}

\begin{figure}[t]
  \centering
\begin{tikzpicture} 
  [vertex/.style= {shape=circle,  fill=#1!100, minimum size =
  8pt, inner sep =0pt,draw}, vertex/.default=black, scale=1.25]

\begin{scope}[xshift=-4.5cm,yshift=1.75cm]
  
\draw (0,0) circle (2cm);

\coordinate [vertex=blue, label=right:$1$] (A1) at (0:2cm);
\coordinate [vertex=red, label=above right:$2$] (A2) at (60:2cm);
\coordinate [vertex=yellow, label=above left:$3$] (A3) at (120:2cm);

\coordinate [vertex=blue, label=left:$1$] (B1) at (-180:2cm);
\coordinate [vertex=red, label=below left:$2$] (B2) at (-120:2cm);
\coordinate [vertex=yellow, label=below right:$3$] (B3) at (-60:2cm);

\begin{scope}[rotate=-60]
\coordinate [vertex=blue, label=0:$4$] (C4) at (240:1cm);
\coordinate [vertex=red, label=266:$5$] (C5) at (120:1cm);
\coordinate [vertex=yellow, label=100:$6$] (C6) at (0:1cm);
\end{scope}


\draw (A1) -- (C5) -- (C6) -- (A1) -- cycle;
\draw (A2) -- (C5) -- (A1);
\draw (A2) -- (C5) -- (A3);
\draw (A3) -- (C4) -- (C5) -- (A3);
\draw (A3) -- (C4) -- (B1);
\draw (B2) -- (C4) -- (C6) -- (B2);
\draw (B2) -- (C6) -- (B3);
\end{scope}

\begin{scope}[scale=1.2]  
\coordinate [vertex=red, label=below:$1$] (A1) at (-.75,0);
\coordinate [vertex=blue, label=below:$4$] (A2) at (.75,0);
\coordinate [vertex=yellow, label=below:$7$] (A3) at (2.25,0);
\coordinate [vertex=red, label=below:$3$] (A4) at (3.75,0);

\coordinate [vertex=red, label=left:$3$] (B2) at (0,1);
\coordinate [vertex=red, label=left:$2$] (B3) at (0,2);
\coordinate [vertex=red, label=above:$1$] (B4) at (-.75,3);

\coordinate [vertex=blue, label=above:$4$] (C2) at (.75,3);
\coordinate [vertex=yellow, label=above:$7$] (C3) at (2.25,3);
\coordinate [vertex=red, label=above:$3$] (C4) at (3.75,3);

\coordinate [vertex=red, label=right:$2$] (D2) at (3,1);
\coordinate [vertex=red, label=right:$1$] (D3) at (3,2);

\coordinate [vertex=blue, label=above:$5$] (E1) at (1.5,2);
\coordinate [vertex=blue, label=below:$6$] (E2) at (1.5,1);

\draw (A1) -- (A2) -- (A3) -- (A4) -- (D2) -- (D3) -- (C4) -- (C3) --
(C2) -- (B4) -- (B3) -- (B2) -- (A1);

\draw (C2) -- (C3) -- (E1) -- (C2);
\draw (A2) -- (A3) -- (E2) -- (A2);
\draw (D3) -- (E1) -- (E2) -- (D3);
\draw (D2) -- (E2) -- (A3) -- (D2);
\draw (C3) -- (D3);
\draw (B3) -- (C2);
\draw (B3) -- (E1);
\draw (B2) -- (E1);
\draw (B2) -- (E2);
\draw (B2) -- (A2);
\draw [dotted,<->,gray] (A4) .. controls (4.25,0.5) and (4.25,2.5) .. (C4);
\draw [dotted,<->,gray] (A1) .. controls (-1.25,0.5) and (-1.25,2.5)..  (B4);
\end{scope}
\end{tikzpicture}
\caption{$(3,2)$-colorings of $\mathrm{P2}$ and $\mathrm{MT}$}
  \label{fig:P2col}
\end{figure}

\begin{exmp}\label{exmp:MT}
  Let $\mathrm{MT}$ be \Mb 's minimal triangulation of the torus with
  $f$-vector $f(\mathrm{MT})=(7, 21, 14 )$ and $\mathrm{P}2$ the
  triangulation of the projective plane with $f$-vector
  $f(\mathrm{P}2) = (1,6,15,10)$ shown in Figure~\ref{fig:P2col}
  (decorated with $(3,2)$-colorings).  The \chp s of these two
  simplicial complexes are
  \begin{alignat*}{3}
    &\chrps {\mathrm{MT}}r1 = [r]_7, & &\qquad &
    &\chrps {\mathrm{MT}}r2 = r^7 - 14r^5 + 21r^4 + 7r^3 - 21r^2 + 6r
    \\
    &\chrps {\mathrm{P}2}r1 = [r]_6, & &\qquad &
    & \chrps {\mathrm{P}2}r2 = r^6 - 10r^4 + 15r^3 - 6r^2
  \end{alignat*}
  In both cases, the $1$-skeleton is the complete graph on the vertex
  set. The chromatic numbers are $\chs 1{\mathrm{MT}} = 7$, $\chs
  1{\mathrm{P}2} = 6$, and $\chs 2{\mathrm{MT}} = 3 = \chs
  2{\mathrm{P}2}$.  
%
\end{exmp}

  The \chp s of  simple graphs (the $1$-\chp s of simplicial complexes)
  are known to have these properties:
  \begin{itemize}
  \item The coefficients are sign-alternating \cite[\S7,
    Corollary]{rota64}
  \item  The coefficients are $\log$-concave
  (Definition~\ref{defn:logconcave}) in absolute value \cite{huh2012}
\item There are no negative roots and no roots between $0$ and $1$
  \cite{woodall77} 
  \end{itemize}
  In contrast, the coefficients of the $2$-\chp\
  \begin{equation*}
     \chrps {\mathrm{MT}}r2 =  r^7 - 14r^5 + 21r^4 + 7r^3 - 21r^2 + 6r =
     [r]_3(r+1)(r^3 + 2r^2 - 9r + 3)
  \end{equation*}
  are not sign-alternating, not $\log$-concave in absolute value, and
  the polynomial has a negative root and a root between $0$ and $1$.

\subsection{The $s$-\chp\ in falling factorial form}
\label{sec:FF} 

Theorem~\ref{lemma:StK} provides an interpretation of the coefficients
of the falling factorial $[r]_i$ in the $s$-chromatic polynomial of
the simplicial complex $K$.

\begin{defn}\label{defn:SKrs}
$S(K,r,s)$ is the number of partitions of $V(K)$ into $r$
$s$-independent blocks. 
\end{defn}

We think of $S(K,r,s)$ as an $s$-Stirling number of the second kind
for the simplicial complex $K$. If $s > \dim(K)$, then there are no
$s$-simplices in $K$ and all partitions of $V(K)$ are $s$-independent,
so that $S(K,r,s)$ is the Stirling number of the second kind
$S(m(K),r)$ \cite[p $33$]{stanley97}.  We now explain the general
relation between these simplicial Stirling numbers $S(K,r,s)$ and the
usual Stirling numbers of the second kind.

Define the {\em $s$-monochrome set\/} of a partition $P$ of $V(K)$ to
be the set
\begin{equation*}
  M^s(P) = \{ \sigma \in F^s(K) \mid \text{$\sigma$ is contained in a
    block of $P$} \}
\end{equation*}
of all $s$-simplices entirely contained in one of the blocks of $P$.
The set $M^s(P)$ is an element of the $s$-chromatic lattice $L^s(K)$
by Lemma~\ref{lemma:Mscol}.

\begin{thm}\label{thm:SKrs}
  The number of partitions of $V(K)$ into $r$ $s$-independent blocks
  is
  \begin{equation*}
    S(K,r,s) = \sum_{T \in L^s(K)} \mu(\widehat 0,T) S(|\pi(T)|,r)
  \end{equation*}
 where $\mu$ the \Mb\ function for the $s$-chromatic lattice $L^s(K)$. 
\end{thm}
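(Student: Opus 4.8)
The plan is to reproduce, almost verbatim, the Möbius-inversion argument used for Theorem~\ref{thm:LsK}, replacing ``maps $V(K) \to [r]$'' by ``partitions of $V(K)$ into $r$ blocks'' and the power $r^{|\pi(T)|}$ by the Stirling number $S(|\pi(T)|,r)$. First I would introduce, for each $B \in L^s(K)$, the quantity $N(K,r,s,B)$ defined as the number of partitions $P$ of $V(K)$ into $r$ blocks whose $s$-monochrome set $M^s(P)$ equals $B$. This is well defined because $M^s(P) \in L^s(K)$ by Lemma~\ref{lemma:Mscol}. A partition has all its blocks $s$-independent precisely when no $s$-simplex is contained in a block, that is, exactly when $M^s(P) = \emptyset = \widehat 0$; hence $S(K,r,s) = N(K,r,s,\emptyset)$, and the whole task reduces to computing this single value.

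The substantive step, and the only one requiring a genuine combinatorial argument rather than formal manipulation, is the summatory identity
\[
  S(|\pi(A)|,r) = \sum_{A \leq B} N(K,r,s,B), \qquad A \in L^s(K).
\]
To establish it I would show that, for a partition $P$ into $r$ blocks, the condition $A \leq M^s(P)$ (equivalently $A \subseteq M^s(P)$) is equivalent to saying that every block of $\pi(A)$ lies inside a single block of $P$. For the forward direction, take a connected component $A_0$ of $A$: since each $s$-simplex of $A_0$ is monochrome under $P$ and consecutive simplices within a connected component share a vertex, the whole vertex set $V(A_0)$ falls in one block of $P$; the remaining blocks of $\pi(A)$ are the singletons $\{v\}$ with $v \notin V(A)$, which are trivially inside blocks of $P$. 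Conversely, any $\sigma \in A$ lies in the vertex set of its component, a block of $\pi(A)$, hence inside a block of $P$, so $\sigma \in M^s(P)$. Consequently the partitions $P$ into $r$ blocks with $A \leq M^s(P)$ are exactly the coarsenings of $\pi(A)$ into $r$ blocks, and these are counted by grouping the $|\pi(A)|$ blocks of $\pi(A)$ into $r$ nonempty classes, which gives $S(|\pi(A)|,r)$. Summing over the value $B = M^s(P) \geq A$ then yields the displayed identity.

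Finally I would apply Möbius inversion in the lattice $L^s(K)$ (\cite[Proposition 3.7.1]{stanley97}) to this identity, obtaining
\[
  N(K,r,s,A) = \sum_{A \leq B} \mu(A,B)\, S(|\pi(B)|,r),
\]
and then specialize to $A = \widehat 0 = \emptyset$. Since $N(K,r,s,\emptyset) = S(K,r,s)$ and the elements $B \geq \widehat 0$ are all of $L^s(K)$, this is precisely the asserted formula. I expect the main obstacle to be verifying the equivalence ``$A \leq M^s(P) \iff \pi(A)$ refines $P$'' cleanly (in particular the connectivity argument forcing each component into one block) together with the resulting Stirling count; once that identity is in place, the remaining inversion is identical to the one already carried out for Theorem~\ref{thm:LsK}.
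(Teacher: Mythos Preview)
Your proposal is correct and follows essentially the same approach as the paper's own proof: define the refined count by monochrome set, establish the summatory identity $S(|\pi(A)|,r) = \sum_{A \leq B} N(K,r,s,B)$, and apply M\"obius inversion at $A=\widehat 0$. If anything, you supply more detail than the paper does on why $A \leq M^s(P)$ is equivalent to $\pi(A)$ refining $P$, which the paper leaves as a one-line ``because''.
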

\begin{proof}
  For any $B \in L^s(K)$, let $S(K,r,s,B)$ be the number of partitions
  $P$ of $V(K)$ into $r$ blocks with monochrome set $M^s(P)=B$. 
  We want to determine $S(K,r,s,\emptyset) = S(K,r,s)$.
  For
  any $A \in L^s(K)$,
  \begin{equation*}
    S(|\pi(A)|,r) = \sum_{A \leq B} S(K,r,s,B)    
  \end{equation*}
  because there are $S(|\pi(A)|,r)$ partitions $P$ of $V(K)$ into $r$
  blocks with $A \leq M^s(P)$. Equivalently,
  \begin{equation*}
      \sum_{A \leq B} \mu(A,B) S(|\pi(B)|,r) = S(K,r,s,A)
  \end{equation*}
  by \Mb\ inversion \cite[Proposition 3.7.1]{stanley97}. The statement
  of the theorem is the particular case of this formula where
  $A=\widehat 0$.  
\end{proof}

\begin{proof}[Proof of Theorem~\ref{lemma:StK}]
  We simply follow the proof of the similar statement for \chp s for
  graphs \cite[Theorem 15]{read68}.  When $r \geq i$ we can get an
  $(r,s)$-coloring out of one of the $S(K,i,s)$ partitions of $V(K)$
  into $i$ $s$-independent blocks by choosing $i$ out of the $r$
  colors and assigning them to the $i$ blocks. There are $\binom ri$
  ways of choosing the $i$ out of $r$ colors and $i!$ ways of coloring
  $i$ blocks in $i$ colors.  The number of $(r,s)$-colorings of $K$ in
  exactly $i$ colors is thus
\begin{equation*}
  S(K,i,s) \binom ri i!  = S(K,i,s) [r]_i 
\end{equation*}
so that
\begin{equation*}
  \chrps Krs = \sum_{i=1}^{m(K)}  S(K,i,s) [r]_i 
\end{equation*}
is the total number of $(r,s)$-colorings of $K$.
\end{proof}


\begin{cor}\label{cor:chiLsK}
  The reduced \euc\ of the open interval $(\widehat 0, \widehat 1)$ in
  $s$-chromatic lattice $L^s(K)$ is
  \begin{equation*}
    \mu(L^s(K))(\widehat 0, \widehat 1) =
    \sum_{i=\chs sK}^{m(K)} (-1)^{i-1}(i-1)!S(K,i,s)
  \end{equation*}
\end{cor}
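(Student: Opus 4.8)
The plan is to substitute the expansion of $S(K,i,s)$ from Theorem~\ref{thm:SKrs} into the claimed formula and then to collapse the resulting double sum using an orthogonality relation between the two kinds of Stirling numbers. Since $\chs sK$ is by definition the least number of blocks in an $s$-independent partition of $V(K)$, the simplicial Stirling number $S(K,i,s)$ of Definition~\ref{defn:SKrs} vanishes for $i<\chs sK$, so the sum on the right may be extended to all $1\le i\le m(K)$ without changing its value. Inserting $S(K,i,s)=\sum_{T\in L^s(K)}\mu(\widehat 0,T)S(|\pi(T)|,i)$ and interchanging the two finite sums rewrites the right-hand side as
\[
  \sum_{T\in L^s(K)}\mu(\widehat 0,T)\sum_{i=1}^{m(K)}(-1)^{i-1}(i-1)!\,S(|\pi(T)|,i).
\]

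The inner sum carries the only real computation, but it concerns ordinary Stirling numbers and not $K$ at all: for each $n\ge 1$ one has $\sum_{i}(-1)^{i-1}(i-1)!\,S(n,i)=\delta_{n,1}$. I would prove this by recognizing $(-1)^{i-1}(i-1)!$ as the signed Stirling number of the first kind and invoking the orthogonality of the transition matrices between the monomial and falling-factorial bases; equivalently it drops out of the exponential generating function $\sum_{i\ge 1}\frac{(-1)^{i-1}}{i}(e^x-1)^i=\log(e^x)=x$. Taking $n=|\pi(T)|$ then annihilates every term except those $T$ with $|\pi(T)|=1$, reducing the whole expression to $\sum_{T:\,|\pi(T)|=1}\mu(\widehat 0,T)$.

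The final step is to identify this surviving sum with the single value $\mu(\widehat 0,\widehat 1)$, and this is where I expect the difficulty to lie. A partition $\pi(T)$ consists of one block exactly when $T$ is connected with $V(T)=V(K)$; because every $T\in L^s(K)$ has closed connected components, such a $T$ must contain every $s$-simplex of $K$ lying in $V(T)=V(K)$, forcing $T=F^s(K)=\widehat 1$. So $\widehat 1$ is the only candidate, and it genuinely contributes precisely when $\widehat 1$ itself satisfies $|\pi(\widehat 1)|=1$, i.e.\ when the $s$-simplices of $K$ form a single connected component spanning $V(K)$. Under this connectivity hypothesis the sum collapses to $\mu(\widehat 0,\widehat 1)$, which by the Proposition identifying \Mb\ values with reduced \euc s is the reduced \euc\ of the open interval $(\widehat 0,\widehat 1)$, as claimed. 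The main obstacle is thus not the algebra but this collapse: one must verify that $\widehat 1$ is the unique lattice element whose induced partition is indivisible, and it is exactly at this point that connectedness of $F^s(K)$ enters (without it, e.g.\ for two disjoint $s$-simplices, the surviving sum is empty and the identity fails).
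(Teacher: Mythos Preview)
Your argument is correct and is essentially the paper's proof unpacked. The paper's proof is a one-liner: equate the coefficients of $r^1$ in the two expressions
\[
  \sum_{T\in L^s(K)}\mu(\widehat 0,T)\,r^{|\pi(T)|}
  \;=\;
  \sum_{i=\chs sK}^{m(K)} S(K,i,s)\,[r]_i
\]
for $\chrps Krs$ coming from Theorem~\ref{thm:LsK} and Theorem~\ref{lemma:StK}. On the right, the $r$-coefficient of $[r]_i$ is the signed Stirling number $s(i,1)=(-1)^{i-1}(i-1)!$; on the left, only $T$ with $|\pi(T)|=1$ contribute. Your route via Theorem~\ref{thm:SKrs} and the orthogonality $\sum_i(-1)^{i-1}(i-1)!\,S(n,i)=\delta_{n,1}$ is the very same computation, since that orthogonality is exactly what one gets by reading off the $r$-coefficient of $r^n=\sum_i S(n,i)[r]_i$. (The paper even notes, just after Corollary~\ref{cor:chiLsK}, that Theorem~\ref{thm:SKrs} together with $\sum_i S(n,i)[r]_i=r^n$ reproduces the equality of the two polynomial expressions.)

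You are also more careful than the paper on one point. Both arguments reduce to $\sum_{T:\,|\pi(T)|=1}\mu(\widehat 0,T)$, and you correctly observe that this equals $\mu(\widehat 0,\widehat 1)$ only when $F^s(K)$ is connected and spans $V(K)$ (so that $\widehat 1$ is the unique lattice element with a one-block partition). The paper's proof passes over this hypothesis in silence; your example of two disjoint $s$-simplices shows the stated identity can fail without it.
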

\begin{proof}
  Equate the terms of degree $1$ of the two expressions 
\begin{equation}\label{eq:twochisKr}
    \sum_{T \in L^s(K)} \mu(\widehat 0, T) r^{|\pi(T)|}
     = \sum_{i=\chs sK}^{m(K)} S(K,i,s)[r]_i
  \end{equation}
  from Theorem~\ref{thm:LsK} and Theorem~\ref{lemma:StK} for the
  $s$-\chp\ of $K$.
\end{proof}




We observe that
\begin{equation*}
   \sum_i S(K,i,s)[r]_i 
             = \sum_i \sum_T \mu(\widehat 0,T) S(|\pi(T)|,i)[r]_i
             = \sum_T \mu(\widehat 0,T) \sum_i  S(|\pi(T)|,i)[r]_i 
             =  \sum_T \mu(\widehat 0,T) r^{|\pi(T)|}
\end{equation*}
so that Theorem~\ref{thm:SKrs} implies Theorem~\ref{lemma:StK}.

The $s$-chromatic number of $K$ is immediately visible
with the $s$-\chp\ in factorial form because
\begin{equation*}
  \chs sK = \min \{ i \mid S(K,i,s) \neq 0 \}
\end{equation*}
is the lowest degree of the nonzero terms. The positive integer
sequence
\begin{equation*}
  \chrps K{\chs sK}s ,\ldots,\chrps K{m(K)}s = 1
\end{equation*}
has no internal zeros. (If there is a partition of $V(K)$ into $r$
blocks not containing any $s$-simplex of $K$ and $r<m(K)$, then
split one of the blocks with more than one vertex into two sub-blocks
to get a partition of $V(K)$ into $r+1$ blocks containing no
$s$-simplices of $K$.)

The simplicial Stirling numbers satisfy the recurrence relations
\begin{equation*} 
 S(K,r,s) =  \sum_{\substack{\emptyset \subsetneq U \subseteq V(K)-\{
     v_0\} \\ 
     \text{$V(K)-U$ $s$-independent}}} S(K \cap D[U],r-1,s), \qquad
S(K,1,s) =
\begin{cases}
  1 & s> \dim(K) \geq 0  \\ 0 & \text{otherwise} 
\end{cases}
\end{equation*}
To see this, fix a vertex $v_0$ of $K$. 
Let $P$ be  partition of $V(K)$ into $r$ $s$-independent
subsets. Let $U_0$ be the block containing $v_0$. The other blocks in
$P$ form a partition $P_0$ 
of $K \cap D[V(K)-U_0]$ into $r-1$ $s$-independent
subsets. The map $P \leftrightarrow (P_0,U_0)$ is a bijection.

The familiar recurrence relation $S(m,r) = S(m-1,r-1)+rS(m-1,r)$ for
Stirling numbers of the second kind does not readily apply to
simplicial Stirling numbers. The closest analogue may be
\begin{equation*}
  S(K,r,s) = S(K \cap D[V(K)-\{v_0\}],r-1,s) +
  \sum_{P \in  \mathcal{S}(K \cap D[V(K)-\{v_0\}],r,s)}
  |\{ B \in P \mid B \cup \text{$\{v_0\}$ is $s$-independent in $K$} \}|
\end{equation*}
where $v_0$ is a vertex of $K$ and $ \mathcal{S}(K \cap
D[V(K)-\{v_0\},r,s)$ is the set of partitions $P$ of the vertex set of $K
\cap D[V(K)-\{v_0\}$ into $r$ $s$-independent subsets.

\begin{prop}\label{prop:SKisSNis}
Let $K$ be a subcomplex of $L$  and assume that $V(K)=V(L)$.
\begin{enumerate}
\item $S(K,r,s) \geq S(L,r,s)$ for all $r$.\label{prop:SKisSNis1}
\item If $S(K,r,s) = S(L,r,s)$ for some $r$ with
  $\frac{1}{s}(|V|-1) \leq r \leq |V|-s$, then $K^s = L^s$.
  \label{prop:SKisSNis2}
\end{enumerate}
\end{prop}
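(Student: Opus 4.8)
The plan is to prove the first assertion by a direct inclusion of the relevant partition sets, and the second by contraposition, exhibiting for every admissible $r$ a single partition that is counted by $S(K,r,s)$ but not by $S(L,r,s)$.

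Since $K$ is a subcomplex of $L$, every simplex of $K$ is a simplex of $L$, so $F^s(K) \subseteq F^s(L)$. Hence a block $B \subseteq V$ that contains no $s$-simplex of $L$ contains no $s$-simplex of $K$, and every partition of $V=V(K)=V(L)$ into $r$ blocks that is $s$-independent for $L$ is $s$-independent for $K$. This inclusion of partition sets gives $S(K,r,s) \geq S(L,r,s)$ for all $r$, which is the first assertion. Note that $S(K,r,s)$ depends only on the set $F^s(K)$ of $s$-simplices, so the content of the second assertion is exactly that equality of the counts forces $F^s(K)=F^s(L)$.

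For the second assertion I argue contrapositively: assuming $F^s(K) \neq F^s(L)$, I will show that $S(K,r,s) > S(L,r,s)$ for every $r$ in the stated range. By the inclusion above I may choose $\tau \in F^s(L) \setminus F^s(K)$; then $|\tau| = s+1$, and $\tau$ is $s$-independent in $K$ because its only subset of size $s+1$ is $\tau$ itself and $\tau \notin F^s(K)$. Write $m=|V|$. I form a partition $P$ of $V$ having $\tau$ as one block and splitting the remaining $m-s-1$ vertices into $r-1$ blocks each of cardinality at most $s$; any block of size $\leq s$ is automatically $s$-independent (an $s$-simplex has $s+1$ vertices), so every block of $P$ is $s$-independent in $K$, while the block $\tau$ is a monochrome $s$-simplex of $L$. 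Thus $P$ is counted by $S(K,r,s)$ but not by $S(L,r,s)$, and together with the first assertion this yields the strict inequality $S(K,r,s) > S(L,r,s)$. Contraposition then gives: if $S(K,r,s)=S(L,r,s)$ for some $r$ in the range, then $F^s(K)=F^s(L)$, i.e.\ $K^s=L^s$.

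The one point that needs checking is that such a partition $P$ exists for exactly the $r$ in the interval $\tfrac{1}{s}(m-1) \leq r \leq m-s$, and this is where the two endpoints enter. Partitioning the $m-s-1$ leftover vertices into $r-1$ blocks of size at most $s$ is possible if and only if $\lceil (m-s-1)/s\rceil \leq r-1 \leq m-s-1$: the upper bound $r \leq m-s$ is equivalent to $r-1 \leq m-s-1$ (at least one vertex per block), and the lower bound $r \geq \tfrac1s(m-1)$ rearranges to $r-1 \geq (m-s-1)/s$, hence $r-1 \geq \lceil (m-s-1)/s\rceil$ since $r-1$ is an integer. I expect this bookkeeping of the two bounds to be the only real obstacle; once it is in place, the feasibility of a size-$\leq s$ partition into any prescribed admissible number of blocks is routine, interpolating between the all-singletons partition and one whose blocks are as close to size $s$ as possible.
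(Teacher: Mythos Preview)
Your proof is correct and follows essentially the same route as the paper's: part~(1) by the inclusion of the sets of $s$-independent partitions, and part~(2) by choosing $\tau\in F^s(L)\setminus F^s(K)$ and using it as one block together with a partition of $V\setminus\tau$ into $r-1$ blocks of size at most $s$. The paper phrases this last step as the nonemptiness of $\mathcal{S}(D[V-\tau],r-1,s)$ via the formula $\chs s{D[V-\tau]}=\lceil (|V|-s-1)/s\rceil$, which is exactly your bookkeeping of the two endpoints.
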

\begin{proof}
  \noindent \eqref{prop:SKisSNis1} Let $V$ be the vertex set of $K$ and
  $L$. Write $\mathcal{S}(K,r,s)$ and $\mathcal{S}(L,r,s)$ for the set
  of partitions of $V$ into $r$ blocks containing no $s$-simplex of
  $K$ or $L$, respectively. Then $\mathcal{S}(L,r,s) \subseteq
  \mathcal{S}(K,r,s)$ for all $r$ and $s$. Thus $S(L,r,s) \leq
  S(K,r,s)$.
 
  \noindent \eqref{prop:SKisSNis2} Suppose that $\sigma \in
  F^s(L)-F^s(K)$ is an $s$-simplex of $L$ that is not an $s$-simplex
  of $K$. Any partition of the form
  \begin{equation*}
    \{\sigma\} \cup \tau, \qquad \tau \in
    \mathcal{S}(D[V-\sigma],r-1,s), 
  \end{equation*}
  in $\mathcal{S}(K,r,s)-\mathcal{S}(L,r,s)$. The set
  $\mathcal{S}(D[V-\sigma],r-1,s)$ is nonempty when
  \begin{equation*}
    \chs s{D[V-\sigma]} = 
    \left\lceil \frac{|V|-s-1}{s} \right\rceil \leq r-1 \leq |V|-s-1
  \end{equation*}
  and thus $S(K,r,s)$ is strictly greater than $S(L,r,s)$ when
  $\frac{|V|-1}{s} \leq r \leq |V|-s$.
\end{proof}

\begin{rmk}[$S(K,r,s)$ for the complete simplex {$K=D[m]$}]\label{rmk:Smrs}
  For any finite set $M$, let $S(M,r,s)$ stand for $S(D[M],r,s)$
  (Definition~\ref{defn:SKrs}), the number of partitions of the set
  $M$ into $r$ blocks containing at most $s$ elements. Let us even
  write $S(m,r,s)$ in case $M=[m]$, $m \geq 1$, $r,s \geq 0$.
  Clearly, $S(m,r,s)$ is nonzero only when $m/s \leq r \leq m$. Also,
  $S(m,r,s) = S(m,r)$ when $r$ is among the $s$ numbers
  $m-s+1,\ldots,m$.
The recurrence relation
\begin{equation*}
  S(m,r,s) = \sum_{j=m-s}^{m-1} \binom {m-1}j S(j,r-1,s)
\end{equation*}
can be used to compute these numbers.  Table~\ref{tab:chrtableDm}
shows $S(m,r,s)$ for small $m$; the number $S(m,r,s)$ is in row $s$
and column $r$ in the chromatic table (Definition~\ref{defn:chitable})
for $D[m]$.  All the red numbers are usual Stirling numbers of the
second kind.




\begin{table}[t] 
  \centering
  \begin{tabular}[t]{*{2}{>{$}c<{$}}}
    \begin{pmatrix}
      \textcolor{blue}{0} & \textcolor{red}{1} 
    \end{pmatrix} &
    \begin{pmatrix}
      0 & \textcolor{blue}{0} & \textcolor{red}{1} \\ 
      \textcolor{blue}{0} & \textcolor{red}{3} 
      & \textcolor{red}{1} 
    \end{pmatrix} \\
    \begin{pmatrix}
      0 & 0 & \textcolor{blue}{0} & \textcolor{red}{1} \\
      0 & \textcolor{blue}{3} & \textcolor{red}{6} & \textcolor{red}{1} \\
      \textcolor{blue}{0} & \textcolor{red}{7} 
      & \textcolor{red}{6} & \textcolor{red}{1} 
    \end{pmatrix} &
    \begin{pmatrix}
       0 & 0 & 0  & \textcolor{blue}{0} & \textcolor{red}{1} \\
       0 & 0 & \textcolor{blue}{15} 
       & \textcolor{red}{10} &  \textcolor{red}{1} \\
       0 & \textcolor{blue}{10} & \textcolor{red}{25} 
      & \textcolor{red}{10} &  \textcolor{red}{1} \\
       \textcolor{blue}{0}  & \textcolor{red}{15} & \textcolor{red}{25} 
   & \textcolor{red}{10} & \textcolor{red}{1} 
    \end{pmatrix} \\
    \begin{pmatrix}
     0 & 0 & 0 & 0 & \textcolor{blue}{0} & \textcolor{red}{1} \\
     0 & 0 & 15 & \textcolor{blue}{45} 
     & \textcolor{red}{15} & \textcolor{red}{1} \\
     0  & 10 & \textcolor{blue}{75} & \textcolor{red}{65} 
     & \textcolor{red}{15} &  \textcolor{red}{1} \\
     0 & \textcolor{blue}{25} & \textcolor{red}{90} & \textcolor{red}{65} 
     & \textcolor{red}{15} &  \textcolor{red}{1} \\
     \textcolor{blue}{0} & \textcolor{red}{31} 
     & \textcolor{red}{90} & \textcolor{red}{65} 
     & \textcolor{red}{15} &  \textcolor{red}{1} 
    \end{pmatrix} &
    \begin{pmatrix}
      0 &  0 &   0 &  0 &  0 &  \textcolor{blue}{0} &  \textcolor{red}{1} \\
      0 &  0 &  0 & 105 & \textcolor{blue}{105} 
      &  \textcolor{red}{21} &   \textcolor{red}{1} \\
      0 &  0 & 175 
      & \textcolor{blue}{315} & \textcolor{red}{140} &  \textcolor{red}{21} 
      &  \textcolor{red}{1} \\
      0 & 35 & \textcolor{blue}{280} & \textcolor{red}{350} 
      & \textcolor{red}{140}
      &  \textcolor{red}{21} &   \textcolor{red}{1} \\
      0 & \textcolor{blue}{56} & \textcolor{red}{301} 
      & \textcolor{red}{350} & \textcolor{red}{140}
      &  \textcolor{red}{21} &  \textcolor{red}{1} \\
      \textcolor{blue}{0} & \textcolor{red}{63} 
      & \textcolor{red}{301} & \textcolor{red}{350}
      & \textcolor{red}{140} &  \textcolor{red}{21} &  \textcolor{red}{1} 
    \end{pmatrix}
  \end{tabular}
  \caption{Chromatic tables for complete simplices $D[m]$ for $m=2,\ldots,7$ }
  \label{tab:chrtableDm}
\end{table}

According to Theorem~\ref{lemma:StK}, the numbers
$S(m,r,s)$ determine the $s$-\chp\ in falling factorial form
of the complete simplex on $m$ vertices
  \begin{equation*}
    \chrps {D[m]}rs = 
    \sum_{i=\lceil m/s \rceil}^m S(m,i,s) [r]_i
  \end{equation*}
and, according to Corollary~\ref{cor:chiLsK}, they also determine the
reduced \euc\
    \begin{equation*}
       \mu_m^s(1^m)(\widehat 0, \widehat 1) =
         \sum_{i=\lceil m/s \rceil}^m (-1)^{i-1}(i-1)! S(m,i,s)   
    \end{equation*}
   of the $s$-chromatic lattice $L^s(D[m])$.

   More generally, if $w \colon M \to \N$ is a function on $M$ with
   natural numbers as values, let $S(M,w,r,s)$ be the number of
   partitions of $M$ into admissible blocks, where we declare a block
   admissible if it is a singleton or it has weight at most $s$. (Then
   $S(m,r,s) = S([m],1^m,r,s)$ occur when $M=[m]$ and $w=1^m$ places
   weight $1$ on all elements.)  Any such partition is a partition of
   $M$ into blocks of weight at most $s$, and therefore $S(M,w,r,s)
   \leq S(\# M,r,s)$.  In particular, $S(M,w,r,s)$ is nonzero only
   when $\# M/s \leq r \leq \# M$. The recurrence relation
    \begin{equation*}
      S(M,w,r,s) = 
      \sum_{\substack{\emptyset \neq J \subset M - \{\max(M)\} \\ 
    \text{$M-J$ admissible}}} S(J,w\vert J,r-1,s)
    \end{equation*}
    provides a means  to compute these numbers.

    The weighted version of Equation~\eqref{eq:twochisKr} for
    $K=D[m]$, 
    \begin{equation*}
       \sum_{\sigma \in L^s_m(w)} 
      \mu_m^s(w)(\widehat 0, \sigma)r^{|\sigma|} =
      \sum_{i= \lceil m/s \rceil}^m S([m],w,i,s) [r]_i
    \end{equation*}
    implies, by equating coefficients of first degree terms, the
    expression
    \begin{equation}\label{eq:umsw01}
      \mu_m^s(w)(\widehat 0, \widehat 1) = 
       \sum_{i= \lceil m/s \rceil}^m (-1)^{i-1}(i-1)!S([m],w,i,s)
    \end{equation}
    for the \euc\ of the weighted lattice $L^s_m(w)$ from
    Remark~\ref{rmk:wlattice}.
  \end{rmk}

  Because any simplicial complex $K$ is a subcomplex of the complete
  simplex $D[m(K)]$ on its vertex set, we have
  \begin{equation}\label{eq:bounds}
    S(m(K),r) \geq S(K,r,s) \geq S(m(K),r,s),  \qquad 1 \leq r \leq m(K)
  \end{equation}
  Moreover, these inequalities are equalities for the $s$ highest
  values $m(K)-s+1, \ldots, m(K)$ of $r$. Thus the $s$ terms of
  highest falling factorial degree in the $s$-\chp\ of $K$
\begin{equation*}
  \chrps Krs = \sum_{i=0}^{m(K)-s} S(K,i,s)[r]_i +
                \sum_{i=m(K)-s+1}^{m(K)} S(m(K),i)[r]_i
\end{equation*}
are given by the $s$ Stirling numbers $S(m(K),m(K)-s+1), \ldots,
S(m(K),m(K))$ of the second kind. These coefficients depend only
on the size of the vertex set of $K$. We shall next show that the
coefficient number $s+1$ counted from above, $S(K,m(K)-s,s)$,
informs about the number $f_s(K)$ of $s$-simplices in $K$.

\begin{prop}\label{prop:S(K,m-s,s)}
  $S(K,m(K)-s,s) = S(m(K),m(K)-s)-f_s(K)$. If $S(K,m(K)-s,s) =
  S(m(K),m(K)-s,s)$ then $K^s = D[m(K)]^s$.
\end{prop}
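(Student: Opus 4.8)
The plan is to count directly the two families of partitions being compared, exploiting that $m(K)-s$ blocks is only $s$ short of the maximal number $m(K)$ of blocks. Throughout write $m=m(K)$ and recall that an $s$-simplex has $s+1$ vertices, so a block of vertices is automatically $s$-independent as soon as it has at most $s$ elements.

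First I would establish the structural observation that drives everything. If $P$ is a partition of an $m$-element set into $m-s$ blocks of sizes $n_1,\dots,n_{m-s}$, then $\sum_j (n_j-1)=m-(m-s)=s$, so every block has at most $s+1$ elements, and a block of size exactly $s+1$ can occur only as the \emph{unique} non-singleton block. Hence the partitions of $V(K)$ into $m-s$ blocks split into two disjoint families: those whose blocks all have size at most $s$, and those consisting of one block of size $s+1$ together with $m-s-1$ singletons. The first family is counted by $S(m,m-s,s)$ (partitions of $[m]$ into $m-s$ blocks of size at most $s$, in the notation of Remark~\ref{rmk:Smrs}), and the second by the number $\binom{m}{s+1}$ of $(s+1)$-subsets of $V(K)$. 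Since every partition into $m-s$ blocks lies in exactly one family,
\[
S(m,m-s) = S(m,m-s,s)+\binom{m}{s+1}.
\]

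Next I would count the $s$-independent partitions of $V(K)$ into $m-s$ blocks. Every partition in the first family is automatically $s$-independent in $K$, since no block is large enough to contain an $s$-simplex; this contributes $S(m,m-s,s)$. A partition in the second family is $s$-independent exactly when its single $(s+1)$-block is \emph{not} an $s$-simplex of $K$; as precisely $f_s(K)$ of the $\binom{m}{s+1}$ subsets of size $s+1$ are $s$-simplices, this family contributes $\binom{m}{s+1}-f_s(K)$. Adding the two contributions and substituting the displayed identity gives
\[
S(K,m-s,s) = S(m,m-s,s) + \binom{m}{s+1} - f_s(K) = S(m,m-s) - f_s(K),
\]
which is the first assertion.

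For the second assertion, the hypothesis $S(K,m-s,s)=S(m,m-s,s)$ combined with the formula just proved forces $f_s(K)=S(m,m-s)-S(m,m-s,s)=\binom{m}{s+1}$. But $f_s(K)$ can never exceed the total number $\binom{m}{s+1}$ of $(s+1)$-subsets of $V(K)$, so equality means every $(s+1)$-subset is an $s$-simplex of $K$; that is, $K^s=D[m]^s$. (Alternatively one checks that $r=m-s$ satisfies $\frac1s(m-1)\le r\le m-s$ and invokes Proposition~\ref{prop:SKisSNis} with $L=D[m]$.) The only delicate point in the whole argument is the opening structural claim bounding block sizes by $s+1$ and isolating the $s+1$ case as the all-singletons-plus-one-big-block configuration; once that dichotomy is in hand, the two counts and the conclusion are immediate bookkeeping.
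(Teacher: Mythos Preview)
Your proof is correct and follows essentially the same approach as the paper: both rest on the observation that a partition of $V(K)$ into $m-s$ blocks can fail to be $s$-independent only when it consists of a single $(s+1)$-element block that happens to be an $s$-simplex of $K$, together with singletons. The paper states this in one line and subtracts the $f_s(K)$ bad partitions directly from $S(m,m-s)$, while you make the structural dichotomy explicit and route the count through the intermediate quantity $S(m,m-s,s)+\binom{m}{s+1}$; the second assertion is handled identically in both (deduce $f_s(K)=\binom{m}{s+1}$, or equivalently invoke Proposition~\ref{prop:SKisSNis}).
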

\begin{proof}
  The only partitions of the $S(m,m-s)$ partitions of $V(K)$ into
  $m-s$ blocks that are not $s$-independent are those consisting of
  one $s$-simplex of $K$ together with singleton blocks.  If
  $S(K,m(K)-s,s) = S(D[m(K)],m(K)-s,s)$ then $f_s(K) = f_s(D[m(K)])$
  so $K^s=D[m(K)]^s$. (This is a special case of
  Proposition~\ref{prop:SKisSNis}.\eqref{prop:SKisSNis2}.)
\end{proof}

\begin{defn}\label{defn:chitable}
  The {\em chromatic table}, $\chi(K)$, of $K$ is the $(\dim(K) \times
  m(K))$-table with $S(K,r,s)$ in row $s$ and column $r$.
\end{defn}

This means that row $s$ in the chromatic table lists the coefficients
of the $s$-\chp . The chromatic table of a $3$-dimensional simplicial
complex $K$, for instance, looks like this
\begin{center}
  \begin{tabular}[t]
  {>{$}c<{$}|*{8}{ >{$}c<{$} }} 
  {} & r=1 & r=2 & \cdots  &r=m-3 & r=m-2 & r=m-1 & r=m  \\ \hline
  S(K,\cdot,1) & S(K,1,1)  & S(K,2,1) & \cdots & S(K,m-3,1) 
  & S(K,m-2,1) &
  \textcolor{blue}{S(m,m-1)-f_1} & \textcolor{red}{S(m,m)=1} \\
   S(K,\cdot,2) & S(K,1,2)  & S(K,2,2) & \cdots & S(K,m-3,2) 
   & \textcolor{blue}{S(m,m-2)-f_2} &
  \textcolor{red}{S(m,m-1)} & \textcolor{red}{S(m,m)=1} \\
   S(K,\cdot,3) & S(K,1,3)  & S(K,2,3) & \cdots &
   \textcolor{blue}{S(m,m-3)-f_3}   
   & \textcolor{red}{S(m,m-2)} &
  \textcolor{red}{S(m,m-1)} & \textcolor{red}{S(m,m)=1} \\
  \end{tabular}
\end{center}
where the red entries in row $s$ are Stirling numbers of the second
kind $S(m,r)$ for $m-s+1 \leq r \leq m$, and the blue entry in row $s$
is $S(m(K),m(K)-s)-f_s(K)$.


\begin{exmp}\label{exmp:chitable} 
  The chromatic tables of the $2$-dimensional simplicial complexes
  from Examples~\ref{exmp:K},~\ref{exmp:MB}, and \ref{exmp:MT} are
  \begin{alignat*}{3}
    \chi(K) &=
    \begin{pmatrix}
       0 & 0 & 2 & 10 &  \textcolor{blue}{7} & \textcolor{red}{1} \\  
       0 & 15 & 73 & \textcolor{blue}{62} & \textcolor{red}{15} 
      &  \textcolor{red}{1}
    \end{pmatrix} & &\qquad & 
    \chi(\mathrm{MB}) &=
    \begin{pmatrix}
   0  & 0 &  0 &  \textcolor{blue}{0} & \textcolor{red}{1} \\ 
   0 &  5 & \textcolor{blue}{20} & \textcolor{red}{10} &  \textcolor{red}{1}
    \end{pmatrix} \\
    \chi(\mathrm{MT}) &=
    \begin{pmatrix}
        0 &  0 &  0 &  0 &  0 &  \textcolor{blue}{0} &  
      \textcolor{red}{1}  \\  
        0 &  0 & 84 & 231 & \textcolor{blue}{126} & 
       \textcolor{red}{21} &  \textcolor{red}{1}
    \end{pmatrix} & &\qquad & 
    \chi(\mathrm{P}2) &=
    \begin{pmatrix}
       0 & 0 & 0 &  0 &  \textcolor{blue}{0} & \textcolor{red}{1} \\ 
       0 & 0 & 45 & \textcolor{blue}{55} & \textcolor{red}{15} &  
       \textcolor{red}{1}
    \end{pmatrix}
     \end{alignat*} 
     The red entries in column $r$ are Stirling numbers $S(m,r)$ and
     they are independent of the row index. The blue entry in row $s$
     and column $m-s$, which equals $S(m-s,s)-f_s(K)$, detects if $K$
     has maximal $s$-skeleton by Proposition~\ref{prop:S(K,m-s,s)}.
\end{exmp}


 \begin{exmp}\label{exmp:AS3}
   Let $K=\mathrm{AS}3$ be Altshuler's peculiar triangulation of the 
   $3$-sphere with $f$-vector $f= (10, 45, 70, 35)$ \cite{ALT76}.  The
   $1$-chromatic polynomial is $ \chrps {\mathrm{AS}3}r1 = [r]_{10}$ as
   $K^1$ is the complete graph on $10$ vertices. The chromatic table is
   \begin{equation*}
     \chi(\mathrm{AS}3) =
     \begin{pmatrix}
           0 &    0 &    0 &    0 &    0  &   0  &   0 &    0 &  
       \textcolor{blue}{0}  
           &   \textcolor{red}{1} \\
    0 &    0 &    0 & 1360 & 8475 & 10355 & 4200 &  \textcolor{blue}{680} & 
   \textcolor{red}{45}   
   &    \textcolor{red}{1} \\
    0 &   26 &  4320 &  25915 & 38550 & 22152 & \textcolor{blue}{5845} &   
    \textcolor{red}{750} &  \textcolor{red}{45}  &
    \textcolor{red}{1}
     \end{pmatrix}
   \end{equation*}
   The blue numbers determine the $f$-vector
   \begin{equation*}
     f(\mathrm{AS}3) =(10,
   S(10,9)-\chi(\mathrm{AS}3)_{19},
   S(10,8)-\chi(\mathrm{AS}3)_{28},S(10,7)-\chi(\mathrm{AS}3)_{37})
   \end{equation*}
   The row numbers of the first nonzero term in each row tell us that
   $\chs 1{\mathrm{AS}3} = 10$, $\chs 2{\mathrm{AS}3} = 4$, and $\chs
   3{\mathrm{AS}3} = 2$.
\end{exmp}

\begin{exmp}\label{exmp:LS3}
  The nonconstructible, nonshellable $3$-sphere $S^3_{17,74}$,
  $f=(17,91,148,74)$, found by Lutz \cite{lutz04}, has 
\begin{gather*}
    \begin{array}{c|*{9}{c}} 
   {} & r=1 & r=2 & r=3 & r=4 & r=5 & r=6 & r=7 & r=8 & r=9 \\ \hline 
   s=1 & 0 & 0 & 0 & 0 & 0 & 0 & 0 & 88 & 3089 \\
   s=2 & 0 & 0 & 36 & 702475 & 82949364 & 1075420155 & 3827766587 &
   5493687086 & 3876597169 \\
   s=3 & 0 & 422 & 4319865 & 338438489 & 3903094622 & 14292381565 
   & 22946854806 & 19158310796 & 9202775199
  \end{array} \\
 \begin{array}{c|*{8}{c}}
  {} & r=10 & r=11 & r=12 & r=13 & r=14 & r=15 & r=16 & r=17 \\ \hline
  s=1 &  23017 & 55285 & 54973 & 25941 & 6210 & 762 & \textcolor{blue}{45} 
  & \textcolor{red}{1} \\
  s=2 &  1507939074 & 346346664 & 48855523 & 4302470 & 235026 
  & \textcolor{blue}{7672} &
  \textcolor{red}{136} & \textcolor{red}{1} \\
  s=3 & 2708454744 & 507528561 & 61784524 & 4903589 & \textcolor{blue}{249826} 
  & \textcolor{red}{7820} &
  \textcolor{red}{136} 
  & \textcolor{red}{1}
 \end{array}
\end{gather*}
as its chromatic table.  Figure~\ref{fig:SKS^3_1774} shows a
semi-logarithmic plot of the simplicial Stirling numbers
$S(S^3_{17,74},r,s)$.
The triangulation $\Sigma_{16}^3$, $f=(16, 106, 180, 90)$, of the
Poincar\'e homology $3$-sphere constructed by Bj\"orner and Lutz
\cite[Theorem 5]{bjorner-lutz00} has
 \begin{gather*}
  \begin{array}{c|*{8}{c}}
    {} & r=1 & r=2& r=3& r=4& r=5& r=6& r=7& r=8  \\ \hline
    s=1 & 0 & 0 & 0 & 0 & 0 & 0 & 0 & 0 \\
   s=2 & 0 & 0 & 0 & 4589 & 2974411 & 69671411 & 300475213 & 442354547 \\
s=3 & 0 & 3 & 845561 & 70005500 & 701299653 & 2158716508 & 2888730959 &
2000811501 \\ 
   \end{array} \\
 \begin{array}{c|*{8}{c}}
 {} & r=9& r=10& r=11& r=12& r=13& r=14& r=15 & r=16  \\ \hline
 s=1 &  0 & 0 & 0 & 0 & 28 & 44 & \textcolor{blue}{14} & \textcolor{red}{1} \\
 s=2 & 292864435 & 100793551 & 19546606 & 2225261 & 150095 &
 \textcolor{blue}{5840}  &
 \textcolor{red}{120} & \textcolor{red}{1} \\
 s=3 &  792553648 & 190527025 & 28730056 & 2750278 & \textcolor{blue}{165530} &
 \textcolor{red}{6020} & \textcolor{red}{120} & \textcolor{red}{1}
\end{array}
\end{gather*}
as its chromatic table.
\end{exmp}

\begin{figure}[t]
  \centering
  \begin{tikzpicture}  
[vertex/.style= {shape=circle,  fill=#1!100, minimum size =
  6pt, inner sep =0pt,draw=#1},
vertex/.default=black,scale=.75]

\draw[very thin, color=gray,opacity=.3] (-0.1,-0.1) grid (18.5,11.5);
\draw[->] (-0.2,0) -- (18.5,0) node[right] {$r$};
\draw[->] (0,-0.2) -- (0,11.5) node[above] {$\log_{10} S(S^3_{17,74},r,s)$};

\foreach \x in {1,2,3,4,5,6,7,8,9,10,11,12,13,14,15,16,17}
\draw (\x,-0.5) node {$\x$};

\foreach \y in {1,2,3,4,5,6,7,8,9,10,11}
\draw (-0.5,\y) node {$\y$};

\draw  plot[smooth] coordinates
{
(8, 1.9)
(9, 3.5)
(10, 4.4)
(11, 4.8)
(12, 4.8)
(13, 4.4)
(14, 3.8)
(15, 2.9)
(16, 1.7)
(17, 0.00)};
\draw (8,1.9) node[below] {$s=1$};
\foreach \x in 
{(8, 1.9),
(9, 3.5),
(10, 4.4),
(11, 4.8),
(12, 4.8),
(13, 4.4),
(14, 3.8),
(15, 2.9),
(16, 1.7),
(17, 0.00)}
\draw \x node[vertex=red] {};

\draw  plot[smooth] coordinates
{(3, 1.6)
(4, 5.9)
(5, 7.9)
(6, 9.0)
(7, 9.6)
(8, 9.8)
(9, 9.6)
(10, 9.1)
(11, 8.5)
(12, 7.7)
(13, 6.6)
(14, 5.4)
(15, 3.9)
(16, 2.1)
(17, 0.00)};
\foreach \x in 
{(3, 1.6),
(4, 5.9),
(5, 7.9),
(6, 9.0),
(7, 9.6),
(8, 9.8),
(9, 9.6),
(10, 9.1),
(11, 8.5),
(12, 7.7),
(13, 6.6),
(14, 5.4),
(15, 3.9),
(16, 2.1),
(17, 0.00)}
\draw \x node[vertex=red] {};
\draw (3,1.6) node[below] {$s=2$};

\draw  plot[smooth] coordinates
{(2,2.62)
(3,6.63)
(4,8.53)
(5,9.59)
(6,10.2)
(7,10.4)
(8,10.3)
(9,9.97)
(10,9.44)
(11,8.70)
(12,7.79)
(13,6.69)
(14,5.40)
(15,3.89)
(16,2.13)
(17,0.000)};
\foreach \x in 
{(2,2.62),
(3,6.63),
(4,8.53),
(5,9.59),
(6,10.2),
(7,10.4),
(8,10.3),
(9,9.97),
(10,9.44),
(11,8.70),
(12,7.79),
(13,6.69),
(14,5.40),
(15,3.89),
(16,2.13),
(17,0.000)}
\draw \x node[vertex=red] {};
\draw (2,2.62) node[below] {$s=3$};
\end{tikzpicture}
  \caption{The simplicial Stirling numbers for $S^3_{17,74}$}
  \label{fig:SKS^3_1774}
\end{figure}

Observe that all the above chromatic tables have strictly
$\log$-concave rows.

 \begin{defn}\cite{sagan88}\label{defn:logconcave}
   A finite sequence $a_1,a_2,\ldots,a_N$ of $N \geq 3$ nonnegative
   integers is strictly $\log$-concave if $a_{i-1}a_{i+1} < a_i^2$ for
   $1<i<N$ (and $\log$-concave if $a_{i-1}a_{i+1} \leq a_i^2$).
 \end{defn}

 It has been conjectured that the sequence of coefficients of the
 $1$-\chp\ of a simple graph in falling factorial form, $r \to
 S(K,1,r)$, $ \chs 1K \leq r \leq m(K)$, is $\log$-concave
 \cite[Conjecture 3.11]{brenti94}. More generally, one may ask

\begin{question}\label{question}
  Is the finite sequence of simplicial Stirling numbers 
 \begin{equation*}
   r \to S(K,r,s), \qquad \chs sK \leq r \leq m(K),
 \end{equation*} 
  $\log$-concave for fixed $K$ and $s$?
\end{question}

This seems to be the right question to ask as it may be true for {\em
  all\/} the \chp s of a simplicial complex and we have seen that the
absolute value of the coefficients of the $s$-\chp\ are simply not
$\log$-concave for $s>1$.

Note that the Stirling numbers of the second kind, which are upper
bounds for the simplicial Stirling numbers $S(K,r,s)$ by the
inequalities~\eqref{eq:bounds}, are $\log$-concave in $r$
\cite[Corollary 2]{sagan88}.

We shall now examine Question~\ref{question} on two spherical boundary
complexes of cyclic $n$-polytopes.

\begin{defn}
  $\partial\mathrm{CP}(m,n)$, $m>n$, is the $(n-1)$-dimensional simplicial
  complex on the ordered set $[m]$ with the following
  facets: An $n$-subset $\sigma$ of $[m]$ is a facet if and only if
  between any two elements of $[m]-\sigma$ there is an even number of
  vertices in $\sigma$.
\end{defn}

By Gale's Evenness Theorem \cite{gale63}, the simplicial complex
$\partial\mathrm{CP}(m,n)$ triangulates the boundary of the cyclic
$n$-polytope on $m$ vertices.  Thus $\partial\mathrm{CP}(m,n)$ is a simplicial
$(n-1)$-sphere on $m$ vertices and it is $\lfloor n/2
\rfloor$-neighborly in the sense that $\partial\mathrm{CP}(m,n)$ has the same
$s$-skeleton as the full simplex on its vertex set when $s < \lfloor
n/2 \rfloor$. 

\begin{exmp}[Cyclic polytopes with $\log$-concave simplicial Stirling numbers
  of the second kind]
  Let $\partial\mathrm{CP}(m,n)$ be the triangulated boundary of the
  cyclic polytope on $m$ vertices in $\R^n$. The simplicial complex
  $\partial\mathrm{CP}(m,n)$ is an $m$-vertex triangulation of
  $S^{n-1}$. The chromatic tables of the simplicial $3$-spheres
  $\partial\mathrm{CP}(m,4)$ on $m=6,7,8,9,10$ vertices are
  \begin{gather*}
    \begin{pmatrix}
      0 & 0 & 0 & 0 
      & \textcolor{blue}{0} & \textcolor{red}{1} \\
      0 & 1 & 21 
      & \textcolor{blue}{47} & \textcolor{red}{15} & \textcolor{red}{1} \\
      0 & 16 
      & \textcolor{blue}{81} & \textcolor{red}{65} 
      & \textcolor{red}{15} &  \textcolor{red}{1}
    \end{pmatrix}
    \begin{pmatrix}
        0 &  0 &  0 &  0 &  0 
        &  \textcolor{blue}{0} &  \textcolor{red}{1} \\
        0 &  0 & 28 & 147 
        & \textcolor{blue}{112} 
        & \textcolor{red}{21} &   \textcolor{red}{1} \\
        0 & 21 & 238 & \textcolor{blue}{336} & \textcolor{red}{140} 
        & \textcolor{red}{21} &  \textcolor{red}{1}
    \end{pmatrix}
    \begin{pmatrix}
         0 &   0 &   0 &   0 &   0  &  0 
         & \textcolor{blue}{0}   &   \textcolor{red}{1} \\
   0  &  1 &   50 & 393 &  582 
   & \textcolor{blue}{226}  
   &  \textcolor{red}{28}  &   \textcolor{red}{1} \\
   0  & 29 & 654 & 1533 
   & \textcolor{blue}{1030} &  \textcolor{red}{266} 
   &  \textcolor{red}{28} &  \textcolor{red}{1} \\
    \end{pmatrix} \\
    \begin{pmatrix}
         0 &   0 &    0 &   0 &   0 &   0 &   0 
         &   \textcolor{blue}{0} &  \textcolor{red}{1} \\
   0 &   0 &  94 & 1062 & 2523 & 1719 
   &  \textcolor{blue}{408} 
    & \textcolor{red}{36}  &   \textcolor{red}{1} \\
   0 &  36 & 1729 & 6471 & 6591 
   & \textcolor{blue}{2619} &  \textcolor{red}{462} 
    & \textcolor{red}{36} &   \textcolor{red}{1}
    \end{pmatrix}
    \begin{pmatrix}
         0 &    0 &    0  &   0 &    0 &    0 &    0 &    0 
         & \textcolor{blue}{0} &
         \textcolor{red}{1} \\
    0 &    1 &  180 &  2980 & 10200 & 10777 &  4225 
    &  \textcolor{blue}{680} 
    & \textcolor{red}{45}  &
    \textcolor{red}{1} \\
   0 &   46 &  4445 & 25960 & 38550 & 22152 
   &  \textcolor{blue}{5845}
   &  \textcolor{red}{750} &   \textcolor{red}{45} & \textcolor{red}{1}
    \end{pmatrix}
  \end{gather*}
  All rows are strictly $\log$-concave. As
  $\partial\mathrm{CP}(m,4)^1=D[m]^1$, the $1$-chromatic number $\chs
  1 {\partial\mathrm{CP}(m,4)} = m$, and it is not difficult to see
  that the $2$-chromatic number $\chs 2 {\partial\mathrm{CP}(m,4)}$ is
  $2$ if $m$ is even and $3$ if $m$ is odd \cite{DMN2010}.
  \end{exmp}

Right multiplication with the upper triangular matrix $([j]_i)_{1 \leq
  i,j \leq m(K)}$ with $[j]_i = \binom ji i! = \frac{j!}{(i-j)!}$ in
row $i$ and column $j$ transforms, by Theorem~\ref{lemma:StK}, the
chromatic table into the $(\dim(K) \times m(K))$-matrix
\begin{equation*}
  \chi(K) ([j]_i)_{1 \leq i,j \leq m(K)} =
  \left( \chi^s(K,i) \right)_{\substack{1 \leq s \leq \dim(K) \\ 1
      \leq i \leq m(K)}}
\end{equation*}
with the $m(K)$ values $\chi^s(K,i)$, $1 \leq i \leq m(K)$, of the
$s$-\chp\ in row $s$. This matrix of chromatic polynomial values
appears also to have $\log$-concave rows.

\section{Chromatic uniqueness}
\label{sec:chrunique}

In this section we briefly discuss to what extent simplicial complexes
are determined by their \chp s.  Proposition~\label{prop:S(K,m-s,s)}
shows that the chromatic table of a simplicial complex determines its
$f$-vector.

\begin{defn}\label{defn:unique}
  $K$ is {\em chromatically unique\/} if it is determined up to iso\m\
  by its chromatic table.
\end{defn}


In Lemma~\ref{lemma:KveeL} below, $K \amalg L$ is
the disjoint union and $K \vee L$ the one-point union of $K$ and $L$.
The proof is identical to the one for the similar statements about
chromatic polynomials for simple graphs.

\begin{lemma}\label{lemma:KveeL}
  If $K$ and $L$ are finite simplicial complexes then
  \begin{equation*}
    \chrps {K \amalg L}rs = \chrps Krs \chrps Lrs, \qquad
    \chrps {K \vee L}rs = \frac{\chrps Krs \chrps Lrs}{r}
  \end{equation*}
  for all $r$ and all $s \geq 0$.
\end{lemma}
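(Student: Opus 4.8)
The plan is to exploit the fact that neither construction introduces any new $s$-simplices: for $s \geq 1$ every $s$-simplex of $K \amalg L$, and of $K \vee L$, lies entirely in the $K$-part or entirely in the $L$-part, since the one-point union glues the two complexes only along a single vertex and creates no simplices with vertices in both $V(K)\setminus\{v_0\}$ and $V(L)\setminus\{v_0\}$. So first I would record that $F^s(K \amalg L) = F^s(K) \sqcup F^s(L)$ and likewise $F^s(K \vee L) = F^s(K) \sqcup F^s(L)$ (disjointly, as a common $s$-simplex would require two shared vertices). Consequently a map defined on the union has no monochrome $s$-simplex precisely when each of its restrictions to $K$ and to $L$ is an $(r,s)$-coloring.

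For the disjoint union this finishes matters at once. A map $V(K \amalg L) = V(K) \sqcup V(L) \to [r]$ is the same datum as a pair of maps $V(K) \to [r]$ and $V(L) \to [r]$, and by the observation above it is an $(r,s)$-coloring of $K \amalg L$ if and only if both components are $(r,s)$-colorings. Counting the independent choices gives $\chrps {K \amalg L}rs = \chrps Krs\,\chrps Lrs$.

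For the one-point union I would write $v_0$ for the identified (wedge) vertex. A coloring of $K \vee L$ is a pair of colorings of $K$ and of $L$ that assign the same color to $v_0$, and it is valid exactly when both are valid. Grouping by the color $c$ assigned to $v_0$,
\[
  \chrps {K \vee L}rs = \sum_{c=1}^r N_K(c)\,N_L(c),
\]
where $N_K(c)$ is the number of $(r,s)$-colorings of $K$ with $\mathrm{col}(v_0) = c$, and $N_L(c)$ the analogous count for $L$. The key point is that $N_K(c)$ does not depend on $c$: the symmetric group $S_r$ acts on $[r]$ and hence permutes the $(r,s)$-colorings of $K$, because applying a bijection of $[r]$ to a coloring neither creates nor destroys a monochrome simplex. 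Thus for any two colors the corresponding transposition exhibits a bijection between the fibers, so the $r$ fibers all have the same size $\chrps Krs/r$; the same holds for $L$. Summing $r$ equal terms yields
\[
  \chrps {K \vee L}rs = r\cdot\frac{\chrps Krs}{r}\cdot\frac{\chrps Lrs}{r} = \frac{\chrps Krs\,\chrps Lrs}{r}.
\]

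The only genuine content is this color-symmetry argument, which is the step I expect to be the main (though elementary) obstacle: it is what legitimizes dividing by $r$, and it reflects the fact that $\chrps Krs$ has no constant term, since a nonempty complex admits no $(0,s)$-colorings, and is therefore divisible by $r$ as a polynomial. The degenerate value $s=0$ requires no separate effort, since then every vertex is already a monochrome $0$-simplex, all three chromatic polynomials vanish, and both identities read $0=0$.
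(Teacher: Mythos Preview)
Your argument is correct and is exactly the standard graph-theoretic proof the paper has in mind; the paper itself gives no details beyond the remark that ``the proof is identical to the one for the similar statements about chromatic polynomials for simple graphs.'' Your write-up simply spells that out, including the only nontrivial step (the $S_r$-symmetry on colors showing each fiber over the wedge vertex has size $\chrps Krs/r$), so there is nothing to add.
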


The two nonisomorphic simplicial complexes
\begin{center}
 \begin{tikzpicture}
[vertex/.style= {shape=circle,  fill=#1!100, minimum size =
  4pt, inner sep =0pt,draw}, vertex/.default=black,scale=0.75,xscale=1.25]

\begin{scope}
    \path [fill = green!20] (0,0) -- (2,2) -- (2,-2) -- (0,0); 
  \path [fill = green!20] (4,0) -- (2,2) -- (2,-2) -- (4,0); 
  \path [fill = green!20] (4,0) -- (6,2) -- (6,-2) -- (4,0);

  \draw (2,2) -- (2,-2);
  \draw (0,0) -- (2,2) -- (4,0) -- (6,2) -- (6,-2) -- (4,0) -- (2,-2)
  -- (0,0) -- cycle;

  \draw (0,0) node[vertex] {};
  \draw (2,2) node[vertex] {};
  \draw (2,-2) node[vertex] {};
  \draw (4,0) node[vertex] {};
  \draw (6,-2) node[vertex] {};
  \draw (6,2) node[vertex] {};
\end{scope}

\begin{scope}[xshift=7cm]
    \path [fill = green!20] (0,0) -- (2,2) -- (2,-2) -- (0,0); 
  \path [fill = green!20] (4,0) -- (2,2) -- (2,-2) -- (4,0); 
  \path [fill = green!20] (4,0) -- (6,2) -- (6,-2) -- (4,0);

  \draw (0,0) -- (4,0);
  \draw (0,0) -- (2,2) -- (4,0) -- (6,2) -- (6,-2) -- (4,0) -- (2,-2)
  -- (0,0) -- cycle;

  \draw (0,0) node[vertex] {};
  \draw (2,2) node[vertex] {};
  \draw (2,-2) node[vertex] {};
  \draw (4,0) node[vertex] {};
  \draw (6,-2) node[vertex] {};
  \draw (6,2) node[vertex] {};
\end{scope}
\end{tikzpicture}
\end{center}
are not chromatically unique as they have identical chromatic tables
\begin{equation*}
  \begin{pmatrix}
     0 &  0 & 2 &10 & \textcolor{blue}{7} & \textcolor{red}{1} \\
     0  & 15 & 73 
     & \textcolor{blue}{62} & \textcolor{red}{15} &  \textcolor{red}{1}
  \end{pmatrix}
\end{equation*}
by Lemma~\ref{lemma:KveeL}. (These two complexes are, however,
PL-isomorphic.)

On the other hand,
Proposition~\ref{prop:SKisSNis}.\eqref{prop:SKisSNis2} immediately
implies that the $s$-skeleton of a full simplex is chromatically
unique (in a very strong sense).

\begin{prop}
  If $K$ has the same $s$-\chp\ as a full simplex $D[N]$, then $K$ and
  $D[N]$ have isomorphic $s$-skeleta.
\end{prop}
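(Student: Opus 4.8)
The plan is to extract from the $s$-\chp\ precisely the combinatorial data that Proposition~\ref{prop:SKisSNis} consumes, namely the number of vertices together with the simplicial Stirling numbers. By Theorem~\ref{lemma:StK} we have $\chrps Krs=\sum_i S(K,i,s)[r]_i$, and since the falling factorials $[r]_i$ form a basis for the polynomials in $r$, the $s$-\chp\ determines every coefficient $S(K,i,s)$. Its degree is the largest $i$ with $S(K,i,s)\ne 0$, which is $i=m(K)$, because the partition of $V(K)$ into singletons is always $s$-independent and hence $S(K,m(K),s)=1$. Thus the hypothesis $\chrps Krs=\chrps{D[N]}rs$ forces first $m(K)=N$ (equal degrees) and then $S(K,i,s)=S(D[N],i,s)$ for every $i$ (equal coefficients).

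First I would fix a bijection $V(K)\cong[N]$; as $m(K)=N$, this realizes $K$ as a subcomplex of $D[N]=D[V(K)]$ on the same vertex set, which is exactly the setting of Proposition~\ref{prop:SKisSNis}. Then I would isolate the single coefficient that detects the $s$-skeleton: at the index $r=N-s$ the equality of coefficients reads $S(K,N-s,s)=S(D[N],N-s,s)$, which is precisely the hypothesis of Proposition~\ref{prop:S(K,m-s,s)} (equivalently of Proposition~\ref{prop:SKisSNis}.\eqref{prop:SKisSNis2} at $r=m(K)-s$). Its conclusion is $K^s=D[N]^s$. Since $D[N]^s$ is the complete $s$-skeleton, $K$ must then contain every $(s+1)$-subset of its vertices as a simplex and, being a simplicial complex, all of their faces as well; hence $K^s$ is a complete $s$-skeleton on $N$ vertices, abstractly isomorphic to $D[N]^s$ independently of the chosen bijection, which is the asserted conclusion.

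The one point to verify is that $r=N-s$ lies in the admissible range $\tfrac1s(N-1)\le r\le N-s$ required by Proposition~\ref{prop:SKisSNis}.\eqref{prop:SKisSNis2}. The upper bound holds by the choice of $r$, and the lower bound $N-s\ge\tfrac1s(N-1)$ rearranges to $(s-1)(N-s-1)\ge 0$, i.e.\ to $N\ge s+1$; this is exactly the range in which $D[N]$ carries $s$-simplices and its $s$-\chp\ is nonconstant, so it is the meaningful case. I do not expect a genuine obstacle here: the entire weight of the argument rests on Proposition~\ref{prop:SKisSNis}, and Theorem~\ref{lemma:StK} contributes only the observation that the $s$-\chp\ faithfully records $m(K)$ together with every simplicial Stirling number $S(K,i,s)$, so that chromatic equality with $D[N]$ is converted into exactly the numerical hypothesis already handled by that proposition.
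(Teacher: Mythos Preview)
Your proof is correct and follows essentially the same approach as the paper: extract the vertex count and the simplicial Stirling numbers from the $s$-chromatic polynomial, then invoke Proposition~\ref{prop:SKisSNis}.\eqref{prop:SKisSNis2} (equivalently Proposition~\ref{prop:S(K,m-s,s)}) at $r=N-s$. The paper reads off $m(K)=N$ via Corollary~\ref{cor:LsK} rather than via the falling-factorial degree, and it does not spell out the range check you included, but these are the only cosmetic differences.
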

\begin{proof}
  If $K$ and $D[N]$ have the same $s$-\chp\ for some $s \geq 1$, then
  $K$ has $N$ vertices (Corollary~\ref{cor:LsK}), and, since
  $\chrps K{N-s}s = \chrps {D[N]}{N-s}s$, the $s$-skeleton of $K$ is
  isomorphic to the $s$-skeleton of the full simplex on $N$ vertices
  (Proposition~\ref{prop:SKisSNis}.\eqref{prop:SKisSNis2}).
\end{proof}

\def\cprime{$'$}
\providecommand{\bysame}{\leavevmode\hbox to3em{\hrulefill}\thinspace}
\providecommand{\MR}{\relax\ifhmode\unskip\space\fi MR }
\providecommand{\MRhref}[2]{%
  \href{http://www.ams.org/mathscinet-getitem?mr=#1}{#2}
}
\providecommand{\href}[2]{#2}


 \end{document}